\newtheorem{thm}{Theorem}[section] 
\newtheorem*{thm*}{Theorem} 
\newtheorem{prop}[thm]{Proposition}
\newtheorem{lem}[thm]{Lemma}
\newtheorem{cor}[thm]{Corollary}
\theoremstyle{definition}
\newtheorem{definition}[thm]{Definition}
\newtheorem{expl}[thm]{Example}
\newtheorem{question}[thm]{Question}
\newtheorem{rem}[thm]{Remark}
\DeclareMathOperator{\C}{\mathbb{C}}
\DeclareMathOperator{\Z}{\mathbb{Z}}
\DeclareMathOperator{\F}{\mathbb{F}}
\DeclareMathOperator{\Q}{\mathbb{Q}}
\DeclareMathOperator{\Gal}{\text{Gal}}
\DeclareMathOperator{\Hom}{{\rm Hom}}
    \DeclareFontFamily{U}{wncy}{}
    \DeclareFontShape{U}{wncy}{m}{n}{<->wncyr10}{}
    \DeclareSymbolFont{mcy}{U}{wncy}{m}{n}
    \DeclareMathSymbol{\Sha}{\mathord}{mcy}{"58}
\numberwithin{equation}{section}
\DeclareSymbolFont{bbold}{U}{bbold}{m}{n}
\DeclareSymbolFontAlphabet{\mathbbold}{bbold}
\newcommand{\Rad}{{\rm Rad}}
\newcommand{\Ann}{{\rm Ann}}
\newcommand{\Sp}{{\mathcal{S}}}
\newcommand{\rad}{{\rm rad}}
\let\emptyset\varnothing
\title{Supercharacters of finite abelian groups \\ and applications to spectra of \\  $U$-unitary Cayley graphs}
 \author{Tung T. Nguyen, Nguy$\tilde{\text{\^{e}}}$n Duy T\^{a}n }
 \address{Department of Mathematics and Computer Science, Lake Forest College, Lake Forest, Illinois, USA}
 \email{tnguyen@lakeforest.edu}
  \address{
Faculty Mathematics and Informatics, Hanoi University of Science and Technology, 1 Dai Co Viet Road, Hanoi, Vietnam } 
\email{tan.nguyenduy@hust.edu.vn}
\thanks{TTN is partially supported by an AMS-Simons Travel Grant.  NDT is partially supported by the Vietnam National
Foundation for Science and Technology Development (NAFOSTED) under grant number 101.04-2023.21}
\keywords{Supercharacter theory, Cayley graphs, Finite rings, Exponential sums.}
\subjclass[2020]{Primary 11L03, 11T24, 05C25}
\begin{document}
\begin{abstract}
We define super-Cayley graphs over a finite abelian group $G$. Using the theory of supercharacters on $G$, we explain how their spectra can be realized as a super-Fourier transform of a superclass characteristic function. Consequently, we show that a super-Cayley graph is determined by its spectrum once an indexing on the underlying group $G$ is fixed. This generalizes a theorem by Sander-Sander, which investigates the case where $G$ is a cyclic group. We then use our theory to define and study the concept of a $U$-unitary Cayley graph over a finite commutative ring $R$, where $U$ is a subgroup of the unit group of $R$. Furthermore, when the underlying ring is a Frobenius ring, we show that there is a natural supercharacter theory associated with $U$. By applying the general theory of super-Cayley graphs developed in the first part, we explore various spectral properties of these $U$-unitary Cayley graphs, including their rationality and connections to various arithmetical sums.

\end{abstract}

\maketitle

\section{Introduction}
Cayley graphs over finite commutative rings are widely studied in the literature.  
Examples include  Paley graphs, unitary Cayley graphs, $p$-unitary Cayley graphs,  involutory Cayley graphs, cubelike graphs
and various natural generalizations (see \cite{unitary, huang2022quadratic, jones2020isomorphisms, keshavarzi2025involutory, minac2023paley,  podesta2019spectral, podesta2021_finitefield, suntornpoch2016cayley}). We refer the reader to \cite{anderson2021graphs, madhumitha2023graphs} and the extensive references therein for some surveys on this line of research. These studies reveal that Cayley graphs defined over rings often have richer structures than those  defined over an abstract abelian group. In fact, in the former case, the interplay between the multiplicative and additive structures of the underlying ring plays a crucial role in the understanding of the associated graphs. Quite naturally, ideals are fundamental in the analysis of these graphs (see, for example \cite{chudnovsky2024prime, nguyen2024certain, nguyen2024gcd, suntornpoch2016cayley}).  Furthermore, it is often the case that the underlying ring has a simple spectral description, which enables us to explicitly describe the spectra of their associated graphs via some classical arithmetical sums such as Gauss sums and Ramanujan sums (see, for example, \cite{garcia2018supercharacter,jones2020isomorphisms, minac2023paley, nguyen2024gcd, podesta2019spectral}). We remark that these sums manifest in other areas of mathematics as well. For example, they appear naturally in the theory of special values of $L$-functions (see \cite{lemmermeyer}). Additionally, they play a fundamental role in  the study of certain Fekete polynomials (see \cite{chidambaram2023fekete,MTT3, MTT4}); namely, these sums are precisely special values of Fekete polynomials at roots of unity. As we explain later in this article, our investigations further show that these arithmetic sums also appear in the spectral description of certain graphs.

In this article, we introduce the notion of $U$-unitary Cayley graphs that unifies various constructions in the literature, including all the graphs mentioned in the previous paragraph. Furthermore, when the underlying ring is a Frobenius ring, we provide a concrete description for the spectra of these generalized gcd-graphs. We achieve this by developing a theory of super-Cayley graphs over an abstract finite abelian group $G$. It turns out that these spectra can be expressed as certain super-Fourier transforms on the space of superclass functions on $G$. As a consequence of this characterization, we show that a super-Cayley graph over $G$ is determined by its eigenvalues once we fix an indexing of superclasses of $G$. This is a direct generalization of \cite[Theorem 1.2]{sander2015so} for classical gcd-graphs over the cyclic group $G=\Z/n$. See \cite{sander2015so, schlage2021determinant} for two different approaches to this theorem when $G=\Z/n$, and \cite{minavc2024isomorphic} for an analog version when $G$ is a quotient of a polynomial ring over a finite field.

We now summarize our main results. We refer the reader to the main text for precise statements that include the definitions of several mathematical terms. The first theorem concerns the spectra of super-Cayley graphs defined over a finite abelian group. 

\begin{thm} \label{thm:first}
Let $G$ be a finite abelian group. Let $(\mathcal{K}, \mathcal{X})$ be a supercharacter theory on $G.$ Let $\Gamma(G,S)$ be a super-Cayley graph with respect to $(\mathcal{K}, \mathcal{X})$.  Then, the following statements hold. 

\begin{enumerate}
\item The eigenvalues of $\Gamma(G,S)$ are $\{\lambda_i\}$ counted with multiplicity where $\lambda_i = \widehat{1_S}(K_i)$. Here $K_i$ is a class in $\mathcal{K}, $ $1_{S}$ is the characteristic function of $S$ and $\widehat{1_{S}}$ is its super-Fourier transform. 
\item If we fix an indexing of $\mathcal{K}$, then $\Gamma(G,S)$ is determined by its spectrum. 
\end{enumerate}
\end{thm}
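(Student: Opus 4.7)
The plan is to pass to the Fourier picture. For an abelian group $G$, any Cayley graph $\Gamma(G,S)$ has adjacency spectrum $\{\hat{1_S}(\chi):\chi\in\widehat{G}\}$, with eigenvalue $\hat{1_S}(\chi)=\sum_{s\in S}\chi(s)$ attached to each character $\chi\in\widehat{G}$. I would reorganize this eigenvalue list block by block via the supercharacter partition of $\widehat{G}$, and then invoke invertibility of the super-Fourier transform to pin down $S$.

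\textbf{Part (1).} Since $\Gamma(G,S)$ is a super-Cayley graph, $S$ is a union of superclasses, so $1_S$ is a superclass function. A foundational property of any supercharacter theory $(\mathcal{K},\mathcal{X})$ on an abelian group is that the classical Fourier transform of a superclass function takes a common value on each block $X_i\in\mathcal{X}$, and under the canonical bijection $\mathcal{X}\leftrightarrow\mathcal{K}$ that value is precisely the super-Fourier coefficient $\widehat{1_S}(K_i)$. Collecting the classical eigenvalues block by block then realizes the spectrum as $\{\widehat{1_S}(K_i)\}_{i=1}^{r}$ with $\widehat{1_S}(K_i)$ counted with multiplicity $|X_i|$, which is the claim.

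\textbf{Part (2).} The super-Fourier transform is a linear isomorphism between the $r$-dimensional space of superclass functions on $G$ and the $r$-dimensional space of functions on $\mathcal{K}$, a consequence of orthogonality of supercharacters. Consequently, the tuple $(\widehat{1_S}(K_1),\dots,\widehat{1_S}(K_r))$ determines $1_S$ and therefore $S$. Fixing an indexing of $\mathcal{K}$ furnishes this tuple from the spectrum via part (1) --- one reads off each eigenvalue against its indexed superclass using the multiplicity data $i\mapsto|X_i|$ intrinsic to the supercharacter theory --- and applying the inverse super-Fourier transform reconstructs $S$.

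The step I expect to require the most care is the matching in part (2): when two indices $i\ne j$ share the same block size $|X_i|=|X_j|$, the unordered multiset spectrum alone does not reveal whether the eigenvalues at positions $i$ and $j$ in the tuple are swapped. The role of the fixed indexing of $\mathcal{K}$ is precisely to remove this labeling ambiguity, yielding a canonical ordered tuple from the spectrum and completing the reconstruction via inverse super-Fourier. This generalizes Sander--Sander in a representation-theoretic way: the invertibility of the super-Fourier transform, which is a formal consequence of orthogonality of supercharacters, replaces the explicit Ramanujan-sum calculations that underlie the cyclic case in \cite{sander2015so}.
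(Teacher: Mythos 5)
Your proposal is correct and follows essentially the same route as the paper: diagonalize $\Gamma(G,S)$ by characters, use Condition (4) of the supercharacter axioms to see that the eigenvalue $\sum_{s\in S}\chi(s)=\sum_{K_j\subset S}\Omega_j(\chi)$ is constant on each block $X_i$, identify that common value with $\widehat{1_S}(K_i)$ via the explicit formula $\widehat{f}(K_i)=\sum_{\ell}f(K_\ell)\Omega_\ell(X_i)$, and then invoke invertibility of the super-Fourier transform for uniqueness. The only step you assert rather than derive is that explicit formula itself, which the paper obtains from the orthogonality relations $\langle\sigma_i,\sigma_j\rangle=|X_i|\delta_{i,j}$ together with the duality $|X_i|\Omega_j(X_i)=|K_j|\sigma_i(K_j)$ and the symmetry $K_j=-K_j$; note also that constancy of $\Omega_j$ on blocks of $\mathcal{X}$ is an added axiom in this paper, not a consequence of the standard supercharacter conditions.
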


The next theorem applies the general results in \cref{thm:first} to the case where the underlying group is the additive group of a finite Frobenius ring. Here, we have more explicit results including the singularity and rationality of the spectra of these super-Cayley graphs. 

\begin{thm}
    Let $R$ be a finite commutative Frobenius ring and $U$ be a subgroup of $R^{\times}.$ Then the following statements hold. 
    \begin{enumerate}
        \item There exists a supercharacter theory $(\mathcal{K}, \mathcal{X})$ associated with $U.$
        \item Let $\Gamma(R,S)$ be a $U$-unitary Cayley graph. Then, each eigenvalue in the spectrum of $\Gamma(R,S)$ can be explicitly expressed as a linear combination of several generalized Ramanujan sums. 

        \item Let $n$ be a positive integer such that $nR=0.$ Let $T$ be a symmetric subset of $R$. Let $K$ be a subfield of $\Q(\zeta_n)$, $H_1$ the subgroup of $\Gal(\Q(\zeta_n)/\Q) = (\Z/n)^{\times}$  associated with $K$ under the Galois correspondence, and $U_1$ the image of $H_1$ under the canonical map $(\Z/n)^{\times} \to R^{\times}.$ Then $\Gamma(R,T)$ is $K$-rational if and only if it is $U_1$-unitary. 

        \item \label{test:condition} Let $U=(R^{\times})^p$ where $p$ is either $1$ or a prime number with $\gcd(p,n)=1$. Suppose further that $\Gamma(R,(R^{\times})^p)$ is connected and anti-connected. Then, $\Gamma(R,(R^{\times})^p)$ is prime if and only if $0$ is not an eigenvalue of $\Gamma(R,R^{\times}).$
    \end{enumerate}
\end{thm}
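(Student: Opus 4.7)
The plan is to interpret the primality of $\Gamma(R,(R^\times)^p)$ ring-theoretically by producing a lexicographic product decomposition of the Cayley graph controlled by the Jacobson radical $J=J(R)$. Writing $R\cong\prod_{i=1}^{\ell}R_i$ with each $R_i$ local Frobenius and $J=\prod_i\mathfrak{m}_i$, the coprimality $\gcd(p,n)=1$ together with the nilpotence of $J$ makes the $p$-th power map bijective on $1+J$, so that $(R^\times)^p=\pi^{-1}\bigl(((R/J)^\times)^p\bigr)$ for the canonical projection $\pi\colon R\to R/J$. Unpacking the definition of the Cayley graph then yields the lexicographic product decomposition
\[
\Gamma\bigl(R,(R^\times)^p\bigr) \;\cong\; \Gamma\bigl(R/J,\,((R/J)^\times)^p\bigr)\bigl[\,\overline{K_{|J|}}\,\bigr].
\]

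On the spectral side, the lex product $G[\overline{K_n}]$ has spectrum consisting of $n\lambda$ for each $\lambda\in\sigma(G)$, together with $0$ of multiplicity $(n-1)|V(G)|$ (since its adjacency matrix is the Kronecker product of $A_G$ with the all-ones $n\times n$ matrix). Taking $p=1$, this gives $0\in\sigma(\Gamma(R,R^\times))$ as soon as $J\neq 0$. Conversely, if $J=0$ then $R\cong\prod_i\F_{q_i}$ and the product-of-rings decomposition gives $\Gamma(R,R^\times)\cong\bigotimes_i K_{q_i}$, whose eigenvalues are the products $\prod_i\epsilon_i$ with $\epsilon_i\in\{q_i-1,-1\}$, none of which vanish. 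Hence $0\notin\sigma(\Gamma(R,R^\times))$ if and only if $J(R)=0$, i.e., $R$ is a product of fields.

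To close the iff, I would combine the above with a classification of lexicographic factorizations of abelian Cayley graphs: any such factorization of $\Gamma(R,(R^\times)^p)$ arises from an additive subgroup $A\le R$ satisfying $A+(R^\times)^p=(R^\times)^p$, and the identity $J(R)=\{a\in R:1+Ra\subseteq R^\times\}$ together with $\gcd(p,n)=1$ forces every such $A$ to be contained in $J$. The D\"orfler--Imrich unique prime factorization theorem for lexicographic products, which applies under the connected and anti-connected hypothesis on $\Gamma(R,(R^\times)^p)$, then yields that $\Gamma(R,(R^\times)^p)$ is prime if and only if the only admissible $A$ is $\{0\}$, equivalently $J(R)=0$, equivalently (by the previous paragraph) $0\notin\sigma(\Gamma(R,R^\times))$. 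The main obstacle I foresee is the classification step, namely proving that every lex fiber in $\Gamma(R,(R^\times)^p)$ is an additive coset in $R$ corresponding to a subgroup of $J$; this requires exploiting the anti-connectedness to transfer the lex structure to the complement graph and to recognize the fibers as twin classes compatible with the regular translation action of $R$ on its Cayley graph.
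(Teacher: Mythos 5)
Your proposal only engages with part (4) of the theorem. Parts (1)--(3) are substantial results in their own right: the paper proves (1) by checking that the $U$-orbits $\mathcal{K}$ and the dual partition $X_i=\{\chi_x\mid x\in K_i\}$ satisfy all four axioms of \cref{def:supercharacter} (\cref{thm:induced_supercharacters}), proves (2) by computing $\widehat{1_S}(K_i)$ via stabilizers as a sum of generalized Ramanujan sums $c(R/\Ann,\,U_{r_\ell r_i},\chi_{r_\ell r_i})$ (\cref{prop:formula_sigma}, \cref{thm:main_spectra}), and proves (3) by a Galois-theoretic analysis of the DFT matrix $A_R$, showing that the $K$-rational vectors $\mathcal{A}=\{v: A_Rv\in K^{|R|}\}$ coincide with the span of the $U_1$-orbit indicators (\cref{thm:general_rationality}). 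None of this is addressed, so the proposal is incomplete as a proof of the stated theorem.

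For part (4) itself, your lexicographic decomposition $\Gamma(R,(R^\times)^p)\cong\Gamma(R/J,((R/J)^\times)^p)[\overline{K_{|J|}}]$ is correct (the surjection $(1+J)\to(1+J)$, $x\mapsto x^p$, does give $(R^\times)^p=\pi^{-1}(((R/J)^\times)^p)$), and it cleanly yields the implications ``$J\neq 0$ $\Rightarrow$ not prime and $0\in\sigma(\Gamma(R,R^\times))$'' together with the non-vanishing of eigenvalues when $R$ is a product of fields and $p=1$. The gap is in the converse direction, ``$J=0\Rightarrow$ prime.'' First, primeness here means the absence of \emph{any} nontrivial homogeneous set (module), which is strictly stronger than indecomposability as a lexicographic product, so D\"orfler--Imrich is not the right tool; one needs the modular-decomposition fact (cited by the paper as \cite[Theorem 3.4]{chudnovsky2024prime}) that a maximal nontrivial homogeneous set containing $0$ is an additive subgroup $A$, and one must separately rule out $A\cap(R^\times)^p\neq\emptyset$. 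Second, even granting an additive subgroup $A$ with $A+(R^\times)^p=(R^\times)^p$, the characterization $J=\{a:1+Ra\subseteq R^\times\}$ quantifies over all multiples $ra$, so it only applies once $A$ is known to be an \emph{ideal}; for a mere additive subgroup the condition $1+A\subseteq R^\times$ does not force $A\subseteq J$. This is exactly the step the paper supplies in \cref{prop:homogeneous_ideal}: multiplication by $u\in U$ is a graph automorphism, maximality gives $uA=A$, and connectivity of $\Gamma(R,U)$ lets one write any $r\in R$ as a $\Z$-combination of elements of $U$, whence $A$ is an ideal, hence nilpotent (\cref{cor:homogeneous_unit}), hence contained in $J$. (Alternatively, the paper's chain product-of-fields $\Rightarrow$ no zero eigenvalue $\Rightarrow$ prime uses the mod-$p$ counting argument of \cref{prop:prime_to_p} on each field factor, which is what makes the general-$p$ case work; your tensor-product computation covers only $p=1$.) Until the classification step you flag as the ``main obstacle'' is carried out along these lines, the equivalence is not established.
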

We remark that we also prove various results about the connectedness and primeness of $U$-unitary Cayley graphs. In addition, we discuss other examples related to the fourth statement. However, these results are rather technical to state, and therefore we refer the readers to the main text for the precise statements. Finally, we note that after the completion of this paper, we found via Google Scholar that part $(3)$ was recently independently obtained by Godsil and Spiga by a different method (see \cite[Theorem 1.3]{godsil2025integral}). While their approach is purely group-theoretic and hence works for all finite groups, ours is more ring-theoretic. More precisely, we exploit the fact that an abstract finite abelian group has an \textit{extra} ring structure and as a result we can use both their additive and multiplicative structures (see \cref{rem:group_ring}). 
\subsection{Outline.}
We develop the theory of super-Cayley graphs in \cref{sec:general_theory}. In this section, we show that the spectra of these graphs can be described as a super-Fourier transform of a superclass characteristic function. In \cref{sec:U_unitary}, we introduce the concept of $U$-unitary Cayley graphs over a finite commutative ring. We discuss various graph-theoretic properties of these graphs, including their connectedness and primeness.  Finally, in \cref{sec:frobenius}, we apply the results from \cref{sec:general_theory} and \cref{sec:U_unitary} to the case where the underlying group is induced by the additive structure of a Frobenius ring. More precisely, we prove that there is a natural supercharacter theory on $R$ for each choice of $U$. Consequently, the spectra of these $U$-unitary Cayley graphs can be expressed as super-Fourier transforms. We study various arithmetical properties of these spectra, including their regularity and rationality. The latter part significantly generalizes the results of \cite{nguyen2024integral, so2006integral} regarding integral graphs.  Additionally, we pose a precise question about the relationship between the primeness of a $U$-unitary Cayley graph and its spectrum. In the last subsection, we explore the connections between these super-Fourier transforms and certain arithmetical sums, including Ramanujan sums, Gauss sums, and Heilbronn sums.

\section{Supercharacter theory and super-Cayley graphs over an abelian group} \label{sec:general_theory}
In this section, we discuss supercharacter theories of an abelian group $G$. We then apply this theory to study certain Cayley graphs on $G.$ In the next section, we will apply this general theory to the case where $G$ is isomorphic to $(R,+)$ where $R$ is a finite commutative ring. We first recall the definition of a supercharacter theory for $G$ (see \cite{brumbaugh2014supercharacters, diaconis2008supercharacters, fowler2014ramanujan}.)  We note that the theory of supercharacter works for a general finite group including a non-abelian one. However, since our primary focus is on Cayley graphs on rings, we will concentrate on the case where $G$ is abelian. We also remark that we add one additional condition for a supercharacter theory, which we will show to be important for later parts.

\begin{definition} \label{def:supercharacter}
    Let $G$ be a finite abelian group. Let $\mathcal{K}=\{K_1, K_2, \ldots, K_m\}$ be a partition of $G$ and $\mathcal{X} =\{X_1, X_2, \ldots, X_m \}$ a partition of the dual group $\widehat{G}=\Hom(G, \C^{\times})$ of characters of $G$. We say that $(\mathcal{K}, \mathcal{X})$ is a supercharacter theory for  $G$ if the following conditions are satisfied 
    \begin{enumerate}
        \item $\{0 \} \in \mathcal{K};$
        \item $|\mathcal{X}| = |\mathcal{K}|;$
        \item For each $X_i \in \mathcal{X}$, the character sum 
        \[ \sigma_i = \sum_{\chi \in X_i} \chi\]
        is constant on each $K \in \mathcal{K}$;
    \item In this paper, as it will be clear later, we add one more condition to $(\mathcal{K}, \mathcal{X})$; namely for a fixed $\chi \in \mathcal{X}$ the sum $\sum_{k \in K_i} \chi(k)$ does not depend on the choice of $\chi \in X.$ 

    \end{enumerate}
\end{definition}
Since our goal is to study the Cayley graphs with respect to the pair $(\mathcal{K}, \mathcal{X})$, we will further assume that $\mathcal{K}$ is symmetric; namely $K_i = -K_i$ for each $1 \leq i \leq m.$

\begin{rem}
    It is unclear to us whether condition $4$ in \cref{def:supercharacter} is a consequence of other conditions. Conditions $3$ and $4$ are somewhat related to the concept on semimagic squares (in \cite[Corollary 2.1.2]{CM1_b}, we show that a normal matrix with constant row sums must also have constant column sums.) 
 \end{rem}

As in \cite{fowler2014ramanujan}, we will denote by  $\sigma_{i} = \sum_{\chi \in X_i} \chi$ and 
by $\sigma_i(K_j)$ the value of $\sigma_i$ at an element of $K_j$. Similarly, we will write 
\[ \Omega_j(\chi) = \sum_{k \in K_j} \chi(k), \]
where $\chi$ is a character of $G$. Furthermore, since $\Omega_j$ is constant when evaluated by elements in $X_i$, we will denote by $\Omega_j(X_i)$ the value $\Omega_j(\chi)$ for an arbitrary $\chi \in X_i$. We have the following duality. 
\begin{prop} \label{prop:relation}
    For each $1 \leq i, j \leq m$ we have 
    \[ \frac{\Omega_j(X_i)|}{|K_j|}  = \frac{\sigma_i(K_j)}{|X_i|}.\]
    Additionally, if $\mathcal{K}$ is symmetric, we also have 
    \[ \Omega_j(X_i)| = \overline{\Omega_j(X_i)|}. \]
\end{prop}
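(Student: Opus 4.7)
The plan is to prove both identities via elementary manipulations; nothing subtle is involved, but it is worth noting that condition (4) of \cref{def:supercharacter} is exactly what is needed to make the first identity go through as a clean double-counting.

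For the first identity, I would consider the double sum
\[ \Sigma_{ij} := \sum_{\chi \in X_i} \sum_{k \in K_j} \chi(k) \]
and compute it in two different ways. Summing over $\chi$ first, the inner sum equals $\sigma_i(k)$ by definition of $\sigma_i$, and by condition (3) this takes the constant value $\sigma_i(K_j)$ for every $k \in K_j$, giving $\Sigma_{ij} = |K_j|\,\sigma_i(K_j)$. Summing over $k$ first, the inner sum is $\Omega_j(\chi)$, and by condition (4) this takes the constant value $\Omega_j(X_i)$ for every $\chi \in X_i$, giving $\Sigma_{ij} = |X_i|\,\Omega_j(X_i)$. Equating the two expressions and dividing by $|K_j|\,|X_i|$ yields the identity.

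For the second identity, I would exploit the assumption that $\mathcal{K}$ is symmetric, i.e. $K_j = -K_j$. Since $\chi$ is a character of a finite abelian group, its values are roots of unity, so $\overline{\chi(k)} = \chi(k)^{-1} = \chi(-k)$. Then
\[ \overline{\Omega_j(\chi)} = \sum_{k \in K_j} \overline{\chi(k)} = \sum_{k \in K_j} \chi(-k) = \sum_{k \in -K_j} \chi(k) = \sum_{k \in K_j} \chi(k) = \Omega_j(\chi), \]
where the second-to-last equality uses $K_j = -K_j$. Hence $\Omega_j(\chi)$ is real, and in particular $\Omega_j(X_i) = \overline{\Omega_j(X_i)}$.

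There is no real obstacle here; the result is essentially a Fubini-type argument combined with the character symmetry. The only point one should be careful about is to flag where condition (4) is used — if one had only the standard three axioms of a supercharacter theory, the inner sum $\Omega_j(\chi)$ need not be constant on $X_i$, and the double-counting would only yield an averaged version of the identity. So the presentation should emphasize that condition (4) provides the missing symmetry that makes $\Omega_j$ and $\sigma_i$ dual to each other on the level of values, not just on the level of sums.
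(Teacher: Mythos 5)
Your proof is correct and follows essentially the same route as the paper: computing the double sum $\sum_{\chi\in X_i}\sum_{k\in K_j}\chi(k)$ in both orders for the first identity, and using $K_j=-K_j$ together with $\overline{\chi(k)}=\chi(-k)$ for the second. Your explicit remark on where condition (4) is needed is a helpful clarification but does not change the argument.
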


\begin{proof}
    We have 
    \[ \sum_{\chi \in X_i} \sum_{k \in K_j} \chi(k) = \sum_{\chi \in X_i} \left[\sum_{k \in K_j} \chi(k) \right] = |X_i| \Omega_j(X_i).\]
    Similarly 
    \[ \sum_{\chi \in X_i} \sum_{k \in K_j} \chi(k) = \sum_{k \in K_j} \left[\sum_{\chi \in X_i} \chi(k) \right] = |K_j| \sigma_i(K_j).\]
    This shows that $|X_i| \Omega_j(X_i) = |K_j| \sigma_i(K_j)$. For the second statement, since $K_j = -K_j$, we have 
    \[ \Omega_j(X_i) = \sum_{k \in K_j} \chi_i(k) = \sum_{k \in K_j} \chi_i(-k) = \sum_{k \in K_j} \overline{\chi_i(k)} = \overline{\Omega_j(X_i)}.
    \qedhere\]
\end{proof}
We will fix a supercharacter theory $(\mathcal{K}, \mathcal{X})$ for $G$ throughout this section.  Let $S$ be a symmetric subset of $G$; namely $S=-S$ and $0 \not \in S.$  The Cayley graph $\Gamma(G,S)$ is the graph whose vertex set is $G$ and two vertices $u,v \in G$ are adjacent if $u-v \in S.$  Once we fix the pair $(\mathcal{K}, \mathcal{X})$, it is natural to consider Cayley graphs associated with it. Specifically,  in what follows, we will define and develop a spectral theory for super-Cayley graphs for $(\mathcal{K}, \mathcal{X}).$ We first formally introduce this definition.

\begin{definition} \label{defn:super_cayley}
    A Cayley graph $\Gamma(G,S)$ is called a super-Cayley graph with respect to the pair $(\mathcal{K}, \mathcal{X})$ if the generating set $S$ is a union of some classes in $\mathcal{K}.$
\end{definition}

We have the following observation. 
\begin{prop} \label{prop:complement}
    Suppose that $\Gamma(G,S)$ is a super-Cayley graph. Then, the complement of $\Gamma(G,S)$ is also a super-Cayley graph. 
\end{prop}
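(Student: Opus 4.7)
The plan is to identify the complement explicitly as a Cayley graph and then verify that its connection set is still a union of superclasses. Writing $\overline{\Gamma(G,S)}$ for the complement, two distinct vertices $u,v \in G$ are adjacent in $\overline{\Gamma(G,S)}$ precisely when $u-v \notin S$. Hence $\overline{\Gamma(G,S)} = \Gamma(G, S')$ where
\[
S' = G \setminus (S \cup \{0\}).
\]
So the task reduces to showing that $S'$ is a symmetric subset of $G$ not containing $0$, and that $S'$ is a union of classes in $\mathcal{K}$.

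First I would check the symmetry of $S'$: since $S$ is symmetric and $\{0\}$ is symmetric, their union is symmetric, and the complement of a symmetric set in an abelian group is symmetric. By construction $0 \notin S'$, so $S'$ is an admissible connection set for a Cayley graph.

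Next I would use the fact that $\mathcal{K}$ partitions $G$ together with the hypothesis that $\{0\} \in \mathcal{K}$ (part (1) of \cref{def:supercharacter}) and that $S$, by \cref{defn:super_cayley}, is a union of classes $K_{i_1}, \ldots, K_{i_r} \in \mathcal{K}$ with $K_{i_s} \neq \{0\}$ for every $s$. Then $S \cup \{0\}$ is itself a union of classes of $\mathcal{K}$ (namely $\{0\} \cup K_{i_1} \cup \cdots \cup K_{i_r}$), and therefore its complement $S'$ is the union of the remaining classes in $\mathcal{K}$. This verifies the condition in \cref{defn:super_cayley} for $\Gamma(G, S') = \overline{\Gamma(G,S)}$, completing the proof.

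There is essentially no serious obstacle here: the statement is a direct bookkeeping consequence of the axiom $\{0\} \in \mathcal{K}$, which guarantees that the exceptional vertex $0$ that must be removed when passing to the complement is itself a full superclass, so removing it respects the partition $\mathcal{K}$. The only subtle point worth remarking on is precisely this role of condition (1) in \cref{def:supercharacter}; without it, $S \cup \{0\}$ might fail to be a union of superclasses and the proposition could fail.
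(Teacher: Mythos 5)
Your proof is correct and follows essentially the same route as the paper's: identify the complement as $\Gamma(G, G \setminus (S \cup \{0\}))$ and use the axiom $\{0\} \in \mathcal{K}$ to conclude the new connection set is a union of the remaining superclasses. Your additional verification of symmetry of $S'$ is a small bonus the paper leaves implicit.
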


\begin{proof}
    The complement of $\Gamma(G,S)$ is precisely $\Gamma(G, S^{c})$ where $S^c = G \setminus (\{0\} \cup S).$ Since $S$ is a union of some classes in $\mathcal{K}$ and $\{0\} \in \mathcal{K}$, $S^c$ is a union of some classes in $\mathcal{K}$ as well. We conclude that the complement of $\Gamma(G,S)$ is a super-Cayley graph. 
\end{proof}
We now discuss spectra of super-Cayley graphs. By the circulant diagonalization theorem for finite abelian groups (see \cite{kanemitsu2013matrices}), the spectrum of a Cayley graph $\Gamma(G,S)$ is given by the family 
\[ \left\{ \sum_{s \in S} \chi(s)\right \}_{\chi}, \]
where $\chi$ runs over the dual group $\widehat{G}:= {\rm Hom}(G, \C^{\times})$ of all characters of $G$. When $\Gamma(G,S)$ is a super-Cayley graph, we can write 
\[ \sum_{s \in S} \chi(s) = \sum_{K_j \subset S} \sum_{s\in K_j}\chi(s) = \sum_{K_j \subset S} \Omega_j(\chi).\]
By this computation, we have the following proposition. 

\begin{prop} \label{prop:eigenvalues}
Let $\Gamma(G,S)$ be a super-Cayley graph. Then,  the spectrum of $\Gamma(G,S)$ is the multiset $\{[\lambda_i]_{|X_i|} \}_{1 \leq i \leq m}$ where 
    \[\lambda_i = \sum_{K_j \subset S} \Omega_j(X_i).\]
    Consequently, $\Gamma(G,S)$ has at most $m$ distinct eigenvalues. 
\end{prop}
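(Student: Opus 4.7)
The plan is to start from the classical diagonalization of Cayley graphs on finite abelian groups (the circulant diagonalization theorem already cited in the text) and then regroup the resulting $|G|$ eigenvalues according to the partition $\mathcal{X}$ of $\widehat G$. Since the excerpt essentially performs the core manipulation just before stating the proposition, what remains is really a matter of organizing the data cleanly and invoking condition $(4)$ of \cref{def:supercharacter} at the right place.

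First, I would write the full spectrum of $\Gamma(G,S)$ as the multiset
\[
\Bigl\{\,\textstyle\sum_{s \in S}\chi(s)\,\Bigr\}_{\chi \in \widehat G},
\]
so that the problem reduces to evaluating each $\sum_{s \in S}\chi(s)$ and grouping the characters appropriately. Next, since $\Gamma(G,S)$ is a super-Cayley graph, $S$ is by \cref{defn:super_cayley} a disjoint union $S = \bigsqcup_{K_j \subset S} K_j$. Splitting the sum gives
\[
\sum_{s \in S}\chi(s) = \sum_{K_j \subset S}\sum_{s \in K_j}\chi(s) = \sum_{K_j \subset S}\Omega_j(\chi),
\]
valid for every character $\chi$ of $G$.

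The key step is to apply condition $(4)$ of \cref{def:supercharacter}: for each fixed $j$, the value $\Omega_j(\chi)$ depends only on the block $X_i$ containing $\chi$, and by definition equals $\Omega_j(X_i)$. Hence, for any $\chi \in X_i$,
\[
\sum_{s \in S}\chi(s) = \sum_{K_j \subset S}\Omega_j(X_i) = \lambda_i.
\]
In particular, each of the $|X_i|$ characters in $X_i$ contributes the same value $\lambda_i$ to the spectrum, so $\lambda_i$ appears with multiplicity at least $|X_i|$. Summing over $i$ gives total multiplicity $\sum_{i=1}^m |X_i| = |\widehat G| = |G|$, which matches the cardinality of the spectrum counted with multiplicity. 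Therefore equality of multisets holds, and $\lambda_i$ appears with multiplicity exactly $|X_i|$. The final clause is then immediate: there are only $m$ values $\lambda_1,\ldots,\lambda_m$, so $\Gamma(G,S)$ has at most $m$ distinct eigenvalues.

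There is no real obstacle in this proof; the content is already present in the paragraph preceding the proposition. The only point worth emphasizing, and the one step that is not a purely formal manipulation, is the essential use of condition $(4)$ of the supercharacter theory, which is exactly what makes the eigenvalue independent of the choice of $\chi \in X_i$ and thus allows the grouping to go through.
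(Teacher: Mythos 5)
Your argument is correct and is essentially identical to the paper's: the proposition is stated as an immediate consequence of the circulant diagonalization theorem together with the computation $\sum_{s\in S}\chi(s)=\sum_{K_j\subset S}\Omega_j(\chi)$ and the independence of $\Omega_j(\chi)$ on the choice of $\chi\in X_i$ guaranteed by condition $(4)$. Your write-up merely makes the multiplicity count $\sum_i |X_i|=|G|$ explicit, which the paper leaves implicit.
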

We will now show that, similar to the classical theory, the eigenvalues $\{\lambda_i\}_{1 \leq i \leq m}$ described in \cref{prop:eigenvalues} can be realized as a super-Fourier transform. To achieve this goal, we  recall some basic background in the representation theory of finite groups. First, we remark that the space of complex valued functions $f\colon G \to \C$ is equipped with the following natural inner product.
\begin{equation} \label{eq:inner_product}
\langle f_1, f_2 \rangle = \frac{1}{|G|} \sum_{g \in G} f_1(g) \overline{f_2(g)}.
\end{equation}

An important feature of $\sigma_i$ is that they are constant in superclasses. For this reason, it is natural to consider functions with a similar property (see \cite{brumbaugh2014supercharacters, fowler2014ramanujan}). 
\begin{definition}
A function $f\colon G \to \C$ is called a superclass function if $f$ is constant on each superclass in $\{K_1, K_2, \ldots, K_m \}.$ We will denote by $f(K_i)$ the value of $f$ at arbitrary element $e \in K_i$ (by definition, this does not depend on the choice of $e$.) The space of all superclass functions with respect to the pair $(\mathcal{K}, \mathcal{X})$ will be denoted by $\mathcal{S}.$
\end{definition}

The space $\Sp$ naturally inherits the inner product described in \cref{eq:inner_product}. For $f_1, f_2 \in \Sp$, their inner product can be rewritten as 

\begin{equation} \label{eq:inner_product_new}
\langle f_1, f_2 \rangle = \frac{1}{|G|} \sum_{\ell=1}^m |K_\ell| f_1(K_\ell) \overline{f_2(K_\ell)}.
\end{equation}

As explained in \cite{brumbaugh2014supercharacters}, $\{\sigma_i \}_{i=1}^n$ forms an orthogonal basis for the space of superclass functions. More precisely, we have the following orthogonal relations. 
\[ \langle \sigma_i, \sigma_j \rangle = |X_i| \delta_{i,j}.\]
Because $\{\sigma_i\}$ forms a basis for $\Sp$, each $f \in \Sp$ can be written as a linear combination of $\{\sigma_i \}_{1 \leq i \leq m}$; namely,
\begin{equation} \label{eq:fourier}
f = \sum_{\ell=1}^m \hat{f}(K_\ell) \sigma_\ell. 
\end{equation}
The map $\mathcal{F}\colon \Sp \to \Sp$ given by $f \mapsto \hat{f}$ is called the non-normalized  discrete Fourier transform of $f$ (see \cite[Equation 3.1]{brumbaugh2014supercharacters}.) This map $\mathcal{F}$ is an automorphism of $\Sp$ since $f$ and $\hat{f}$ determine each other. In fact, using the orthogonality relations on $\{\sigma_i\}_{1 \leq i \leq m}$, we have 
\[ |X_i| \hat{f}(K_i) = \hat{f}(K_i) \langle \sigma_i, \sigma_i \rangle = \langle f, \sigma_i \rangle = \frac{1}{|G|} \sum_{\ell=1}^m |K_\ell| f(K_\ell) \overline{\sigma_i(K_{\ell})}.\] 
By \cref{prop:relation}, we know that $|K_{\ell}|\sigma_i(K_\ell)=|X_i| \Omega_{\ell}(X_i)$ and $\Omega_j(X_i)| = \overline{\Omega_j(X_i)|}$. As a result, we conclude that 
\begin{equation} \label{eq:explicit Fourier}
 \hat{f}(K_i) = \sum_{\ell=1}^m f(K_\ell) \overline{\Omega_\ell(X_i)}= \sum_{\ell=1}^m f(K_\ell) \Omega_\ell(X_i). 
\end{equation}
We are now ready to state our claim that the spectrum of $\Gamma(G,S)$ can be realized as a discrete Fourier transform. 

\begin{thm}
    Let $\Gamma(G,S)$ be a super-Cayley graph. Let $1_{S}$ be the characteristic function of $S$ and $\{[\lambda_i]_{|X_i|} \}_{1 \leq i \leq m}$ the spectrum of $\Gamma(G,S)$ as described in \cref{prop:eigenvalues}. Then for each $1 \leq i \leq m$
    \[ \lambda_i = \widehat{1_S}(K_i).\]
\end{thm}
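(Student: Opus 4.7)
The plan is to observe that the statement is almost immediate once one notes that $1_S$ is a superclass function, and then to apply the explicit formula \eqref{eq:explicit Fourier} for the super-Fourier transform. The only thing requiring a moment of care is the identification of $1_S$ as an element of $\Sp$, which uses the defining property of a super-Cayley graph.

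First I would verify that $1_S \in \Sp$. By \cref{defn:super_cayley}, $S$ is a union of superclasses $K_{j_1}, \ldots, K_{j_r}$, so for every $\ell$, either $K_\ell \subseteq S$ or $K_\ell \cap S = \emptyset$. Hence $1_S$ takes the constant value $1$ on each $K_\ell \subseteq S$ and the constant value $0$ on each $K_\ell \not\subseteq S$. In particular $1_S$ is a superclass function, and its value on a superclass $K_\ell$ is $1_S(K_\ell) = \mathbf{1}[K_\ell \subseteq S]$.

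Second, I would invoke formula \eqref{eq:explicit Fourier} with $f = 1_S$. This gives
\[
\widehat{1_S}(K_i) \;=\; \sum_{\ell=1}^m 1_S(K_\ell)\, \Omega_\ell(X_i) \;=\; \sum_{K_\ell \subseteq S} \Omega_\ell(X_i).
\]
Comparing with the description of the eigenvalues in \cref{prop:eigenvalues}, namely $\lambda_i = \sum_{K_j \subseteq S} \Omega_j(X_i)$, the two expressions agree, which proves the theorem.

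There is essentially no obstacle here; the substance of the result lies in the preceding development, in particular \cref{prop:relation} and formula \eqref{eq:explicit Fourier}. The one point worth double-checking is the use of the identity $\overline{\Omega_\ell(X_i)} = \Omega_\ell(X_i)$ from \cref{prop:relation}, which relies on the standing assumption that $\mathcal{K}$ is symmetric. This symmetry is compatible with our hypothesis $S = -S$, and it is what ensures the simplification in \eqref{eq:explicit Fourier} that lets the bare character sum $\Omega_\ell(X_i)$ appear rather than its conjugate, so the spectrum is realized on the nose as the super-Fourier transform of $1_S$.
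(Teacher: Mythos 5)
Your proof is correct and follows exactly the route the paper intends: the paper states this theorem without a written proof precisely because it is the immediate consequence of \cref{prop:eigenvalues} combined with \eqref{eq:explicit Fourier} applied to $f = 1_S$, which is what you carry out. Your explicit check that $1_S$ is a superclass function, and your remark on where the symmetry of $\mathcal{K}$ enters via \cref{prop:relation}, are exactly the (unstated) ingredients.
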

We have the following corollary, which is a direct generalization of \cite[Theorem 1.2]{sander2015so}. 
\begin{cor}
    $S$ is uniquely determined by the system of eigenvalues $\{\lambda_i\}_{1 \leq i \leq m}.$
\end{cor}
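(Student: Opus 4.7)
The plan is to deduce the corollary immediately from the preceding theorem by exploiting the fact that the super-Fourier transform $\mathcal{F}\colon \Sp\to\Sp$ is an automorphism. By the theorem, the (ordered) tuple of eigenvalues $(\lambda_1,\dots,\lambda_m)$ coincides with the tuple of values $(\widehat{1_S}(K_1),\dots,\widehat{1_S}(K_m))$, which is precisely the superclass function $\widehat{1_S}\in\Sp$ written relative to the fixed indexing of $\mathcal{K}$. So knowing the eigenvalues (with indexing fixed) is the same as knowing $\widehat{1_S}$.

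First I would invoke the discussion around \cref{eq:fourier} and \cref{eq:explicit Fourier}: the map $\mathcal{F}$ expressing a superclass function in the orthogonal basis $\{\sigma_i\}_{1\le i\le m}$ is bijective, so $\widehat{1_S}$ uniquely determines $1_S$. Concretely, one can also invert using the orthogonality relation $\langle\sigma_i,\sigma_j\rangle=|X_i|\delta_{i,j}$: since $1_S=\sum_{\ell}\widehat{1_S}(K_\ell)\sigma_\ell$, evaluating at a representative of $K_j$ yields
\[
1_S(K_j)=\sum_{\ell=1}^m \widehat{1_S}(K_\ell)\,\sigma_\ell(K_j),
\]
so the values $\widehat{1_S}(K_i)=\lambda_i$ recover $1_S(K_j)\in\{0,1\}$ for every $j$.

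Finally, I would close by observing that because $\Gamma(G,S)$ is a super-Cayley graph, $S$ is a union of superclasses, hence is determined by its characteristic function $1_S$ as
\[
S=\bigsqcup_{\{j\,:\,1_S(K_j)=1\}} K_j,
\]
so the $\{\lambda_i\}_{1\le i\le m}$ recover $S$.

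There is no real obstacle here; the content is entirely in the theorem. The only subtlety worth highlighting in the write-up is the indexing: the multiset of eigenvalues alone does not suffice, but once the indexing of $\mathcal{K}$ (and correspondingly of $\mathcal{X}$) is fixed, the assignment $i\mapsto\lambda_i$ is a well-defined superclass function, and the Fourier inversion then pins down $S$ uniquely.
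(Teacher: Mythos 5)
Your proposal is correct and follows exactly the paper's argument: the paper's proof is the one-line observation that the super-Fourier transform $\mathcal{F}\colon \Sp \to \Sp$ is an isomorphism, so $\widehat{1_S}$ determines $1_S$ and hence $S$. You simply spell out the inversion and the indexing subtlety in more detail, which is consistent with and adds nothing contrary to the paper's reasoning.
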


\begin{proof}
    This follows from the fact that the super-Fourier transform map $\mathcal{F}\colon \mathcal{S} \to \mathcal{S}$ is an isomorphism. 
\end{proof}

\section{ $U$-unitary graphs} \label{sec:U_unitary}

\subsection{Definition and examples}
Let $R$ be a finite commutative ring and $S$ be a symmetric subset of $R$; namely $S=-S$ and $0 \not \in S.$ In the previous section, we study Cayley graphs over a finite abelian group. In this section, we apply this general theory to study Cayley graphs on $(R,+)$. More precisely, we are interested in a class of Cayley graphs $\Gamma(R,S)$ over $R$ where the generating set $S$ has an arithmetic origin. Previous works in the literature have explored the case $S$ is the subgroup of invertible elements (see \cite{unitary, klotz2007some}), the subgroup of $p$-powers (see \cite{nguyen2024certain, podesta2019spectral, podesta2021_finitefield}), the solutions of some algebraic equations (see \cite{keshavarzi2025involutory}), and some natural generalizations (see \cite{chudnovsky2024prime, minac2023paley, nguyen2024gcd}).  In those cases, $S$ and hence $\Gamma(R,S)$ often have some additional symmetries that make them more interesting to study.

To start our discussion, we fix a subgroup $U$ of $R^{\times}$ such that $-1 \in U$ (this will ensure that all relevant generating sets are symmetric). The Cayley graph $\Gamma(R,U)$ is introduced in \cite{chudnovsky2024prime} in the context of prime Cayley graphs. More precisely, in that work, we provide the necessary and sufficient conditions for this graph to be prime (see \cite[Theorem 4.1]{chudnovsky2024prime}). We now introduce the concept of generalized gcd-graphs over $R.$
\begin{definition} \label{def:u-unitary}
Let $S$ be a symmetric subset of $R.$ We say that $\Gamma(R,S)$ is a generalized $U$-unitary Cayley graph if $S$ is stable under the action of $U$; namely $US =S.$
\end{definition}
We remark that \cref{def:u-unitary} unifies various concepts in the literature. In the following, we discuss some of them.

\begin{expl} \label{expl:Paley}
Let $R = \mathbb{Z}/p$  where $p$ is a prime number of the form $4k+1$. Let $\rho\colon (\Z/p)^{\times} \to \C^{\times}$ be a quadratic character modulo $p$ given by the Legendre symbol; namely $\rho(m) =\left(\frac{m}{p} \right)$. Let $U$ be the kernel of $\rho$, that is, 
\[ U = \{m \mid \rho(m) = 1. \} \] The Cayley graph $\Gamma(R,U)$ is known as a Paley graph. These Paley graphs have a rich history in mathematics, first appearing implicitly in Paley's 1933 work \cite{paley1933orthogonal} on Hadamard matrices. Interestingly, Carlitz independently rediscovered them in a different mathematical context \cite{carlitz1960theorem}. The significance of Paley graphs in the field is perhaps best summarized by Gareth A. Jones in \cite{jones2020paley}, who noted:
\begin{quotation}
Paley graphs and their automorphism groups are inevitable encounters for anyone deeply engaged in the study of algebraic graph theory or finite permutation groups. 
\end{quotation}
In general, if $R$ is a finite commutative ring and $\rho\colon R \to \C$ is a multiplicative character (see \cite[Section 4.2]{chudnovsky2024prime}), we can define the generalized Paley graph $P_{\rho}$ as the Cayley graph $\Gamma(R, \ker(\rho)).$ The case $R$ is the ring of integers modulo a positive integer $n$ is considered in \cite{minac2023paley}. There, using the theory of Ramanujan and Gauss sums, we describe explicitly the spectra of these Paley graphs. Furthermore, using properties of the  $L$-function attached to $\rho$, we provide some estimates on the Cheeger numbers of $P_{\chi}.$

\end{expl}
\begin{expl}
If $U = R^{\times}$, then a $U$-unitary Cayley graph is exactly a gcd-graph as described in \cite{klotz2007some, nguyen2024gcd, so2006integral}. These gcd-graphs generalize the class of unitary Cayley graphs (corresponding to the case $S = R^{\times}$), which have been a subject of interdisciplinary research since the pioneering work of Klotz and Sander. For example, see \cite{bavsic2009perfect, sander2018structural, so2006integral} for some research directions related to these unitary Cayley graphs.

\end{expl}

\begin{expl} \label{expl:p-unitary}
    Let $U=(R^{\times})^p$ if $p$ is an odd positive integer and $U=  (R^{\times})^p\cup (-(R^\times)^p)$ if $p$ is even. The Cayley graph $\Gamma(R,U)$ is known as a $p$-unitary Cayley graph. As explained in various works such as \cite{podesta2019spectral, podesta2023waring}, these graphs have found applications in solving various Diophantine problems over finite fields. We note that the case $p=2$ gives a generalization of the classical Paley graph (see  \cite{huang2022quadratic,liu2015quadratic} for some investigations regarding these graphs over an arbitrary finite commutative ring).
\end{expl}

\begin{expl} \label{expl:involution}
Let $U \subset R^{\times}$ be the set of elements $x \in R$ such that $x^2=1.$ The graph $\Gamma(R,U)$ is called an involutory Cayley graph (see \cite{keshavarzi2025involutory}).
    \end{expl}

\begin{expl}
Let $R = (\Z/2)^m$, where $m$ is a positive integer. Then $R^{\times} = {(1,1,\ldots, 1)}$, so every subgroup $U$ of $R^{\times}$ is trivial. As a result, $\Gamma(R,S)$ is a $U$-unitary Cayley graph for all choices of $S$. This type of graph is known as a cubelike graph in the literature (see \cite{cheung2011perfect}).

\end{expl}

We now discuss some structural properties of $U$-unitary Cayley graphs. Expanding on Jones' observation in \cref{expl:Paley}, we remark that for a general $U$-unitary Cayley graph $\Gamma(R,S)$, there is a natural group homomorphism from $U$ to the group of automorphisms of $\Gamma(R,S)$. 

\begin{prop} \label{prop:automorphism}
    Let $\Gamma(R,S)$ be a $U$-unitary graph. Then the natural action of $U$ on $R$ induces an automorphism on $\Gamma(R,S).$
\end{prop}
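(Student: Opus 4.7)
The plan is straightforward: for each $u \in U$, define $\phi_u\colon R \to R$ by $\phi_u(x) = ux$, and show that $\phi_u$ is an element of $\Aut(\Gamma(R,S))$ and that the assignment $u \mapsto \phi_u$ is a group homomorphism. First, I would note that $\phi_u$ is a bijection on the vertex set $R$: since $u \in U \subseteq R^{\times}$, the map $\phi_{u^{-1}}$ is a two-sided inverse for $\phi_u$. Multiplicativity $\phi_{uv} = \phi_u \circ \phi_v$ follows directly from associativity of multiplication in $R$.

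The core step is verifying that $\phi_u$ preserves adjacency. Recall that two vertices $x, y \in R$ are adjacent in $\Gamma(R,S)$ precisely when $x - y \in S$. By distributivity in $R$,
\[
\phi_u(x) - \phi_u(y) = ux - uy = u(x-y),
\]
so $\phi_u(x)$ and $\phi_u(y)$ are adjacent if and only if $u(x-y) \in S$. The hypothesis of \cref{def:u-unitary} that $\Gamma(R,S)$ is $U$-unitary means $US = S$; applied with $u$ and then with $u^{-1}$ (both in $U$), this yields $uS = S$ and $u^{-1}S = S$. Consequently $u(x-y) \in S$ if and only if $x - y \in u^{-1}S = S$, which is exactly the adjacency condition for $x$ and $y$. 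Thus $\phi_u$ preserves both edges and non-edges, and so belongs to $\Aut(\Gamma(R,S))$.

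There is no real obstacle in this argument; the content of the proposition is the clean observation that the stability condition $US = S$ is exactly what forces multiplication by units of $U$ to be compatible with the Cayley-graph structure. If desired, one could additionally record that the homomorphism $U \to \Aut(\Gamma(R,S))$ is injective, since $\phi_u = \mathrm{id}$ forces $u \cdot 1 = 1$, i.e.\ $u = 1$; but this is not needed for the statement.
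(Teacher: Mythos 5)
Your proposal is correct and follows exactly the same route as the paper's proof, which simply notes that since $U$ is a group and $US=S$, multiplication by each $u\in U$ is an automorphism; you have merely spelled out the bijectivity and adjacency-preservation details that the paper leaves implicit. No issues.
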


\begin{proof}
Since $U$ is a group and $S$ is stable under $U$, for each $u \in U$, the multiplication by $u$ is an automorphism of $\Gamma(R,S).$
\end{proof}
We now provide some concrete criteria for a Cayley graph $\Gamma(R,S)$ to be a $U$-unitary graph. Let $\mathcal{K} = \{K_1, K_2, \ldots, K_{m} \}$ be the orbits of $R$ under the action of $U.$ By definition, two elements $x,y$ belong to the same orbit if and only if $x=uy$ for some $u \in U.$ Additionally, we know that $S$ is stable under the action of $U$ if and only if $S$ is a union of some classes in  $\mathcal{K}.$ In summary, we have the following characterization of $U$-unitary Cayley graphs. 
\begin{prop}
The following conditions are equivalent.
\begin{enumerate}
    \item $\Gamma(R,S)$ is a $U$-unitary graph. 
    \item For each orbit $K_i$ where $1 \leq i \leq m$, if $S \cap K_i \neq \emptyset$ then $K_i \subset S.$
    \item $S$ is a disjoint union of some orbits $K_i$.
\end{enumerate}
\end{prop}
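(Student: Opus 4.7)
The plan is to prove the three-way equivalence by establishing the cycle $(1) \Rightarrow (2) \Rightarrow (3) \Rightarrow (1)$, all of which follow directly from unpacking the definitions of $U$-stability and of the orbits $K_i$. No deep machinery is needed; the statement is a reformulation lemma that sets up the connection to the supercharacter framework of \cref{sec:general_theory}.

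For $(1) \Rightarrow (2)$, I would assume $US = S$ and suppose $S \cap K_i \neq \emptyset$, picking some $x \in S \cap K_i$. Any $y \in K_i$ is by definition of the orbit of the form $y = ux$ for some $u \in U$, and the hypothesis $US = S$ forces $y = ux \in S$, so $K_i \subseteq S$.

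For $(2) \Rightarrow (3)$, I would use that $\mathcal{K} = \{K_1, \dots, K_m\}$ is a partition of $R$, so $S = \bigsqcup_{i=1}^m (S \cap K_i)$. By $(2)$, whenever $S \cap K_i$ is nonempty we actually have $S \cap K_i = K_i$. Letting $I = \{i : S \cap K_i \neq \emptyset\}$ then gives $S = \bigsqcup_{i \in I} K_i$.

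For $(3) \Rightarrow (1)$, write $S = \bigsqcup_{i \in I} K_i$. For any $u \in U$, multiplication by $u$ permutes each orbit (since $U$ acts on $R$ through itself, each $K_i$ is by definition $U$-stable: $u K_i = K_i$). Therefore $uS = \bigsqcup_{i \in I} u K_i = \bigsqcup_{i \in I} K_i = S$, and taking the union over $u \in U$ yields $US = S$. The main (and only minor) point of care is the observation that $uK_i = K_i$ for every $u \in U$, which is immediate from the orbit description; there is no substantive obstacle in the argument.
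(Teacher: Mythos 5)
Your proof is correct and follows the same reasoning the paper uses: the paper states this proposition without a formal proof, deriving it directly from the observations that the $K_i$ partition $R$ into $U$-orbits and that $US=S$ holds precisely when $S$ is a union of orbits, which is exactly what your cycle of implications spells out. Nothing is missing.
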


Using an argument identical to \cref{prop:complement} we have the following.
\begin{prop} \label{prop:complement_U}
    Suppose that $\Gamma(R,S)$ is an $U$-unitary Cayley graph. Then its complement is also a $U$-unitary Cayley graph. 
\end{prop}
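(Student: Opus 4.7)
The plan is to reproduce, essentially verbatim, the argument used for \cref{prop:complement}, working with the $U$-orbits on $R$ in place of the superclass partition $\mathcal{K}$. First I would observe that the complement of $\Gamma(R,S)$ is precisely $\Gamma(R, S^c)$, where $S^c = R \setminus (\{0\} \cup S)$, and that $S^c$ is symmetric because both $\{0\}$ and $S$ are symmetric (the latter following from $-1 \in U$ together with $US = S$).

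The key step is the orbit characterization established in the proposition immediately preceding this one: a subset of $R$ is stable under $U$ if and only if it is a disjoint union of $U$-orbits. Since $u \cdot 0 = 0$ for every $u \in U$, the singleton $\{0\}$ is itself a $U$-orbit, say $K_1$. By assumption $S$ is a disjoint union of $U$-orbits, so $\{0\} \cup S$ is such a disjoint union as well, and therefore so is its set-theoretic complement $S^c$. Applying the orbit characterization once more shows that $\Gamma(R, S^c)$ is a $U$-unitary Cayley graph. There is no substantive obstacle; the only point worth recording is that $\{0\}$ is a singleton $U$-orbit, which is exactly the role played by the axiom $\{0\} \in \mathcal{K}$ in the analogous argument for \cref{prop:complement}.
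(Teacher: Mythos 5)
Your argument is correct and is exactly the one the paper intends: the paper proves this proposition by declaring it "identical to \cref{prop:complement}," whose proof is precisely your observation that $S^c = R\setminus(\{0\}\cup S)$ is a union of $U$-orbits because $S$ is and $\{0\}$ is itself a $U$-orbit. Your explicit remarks on the symmetry of $S^c$ and on $u\cdot 0=0$ are minor elaborations of the same route, not a different approach.
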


\subsection{Connectedness of $U$-unitary Cayley graphs}
In this section, we study the connectedness of a $U$-unitary Cayley graph.  We recall that $\Gamma(R,S)$ is connected if and only if $S$ generates $R$ as an abelian group. We note that, for connectedness, we only need the additive structure of $R.$ However, as we soon see, the multiplicative structure also plays an implicit but fundamental role. This fact echos our earlier comment in the introduction that Cayley graphs over a ring often has richer structure.

We introduce the following convention: for each $1 \leq i \leq m$, let $I_i$ be the ideal generated by an element of $ x \in K_i$ (by definition of $K_i$, $I_i$ is independent of the choice of $x$). 

\begin{prop} \label{prop:connectedness}
Let $\Gamma(R,S)$ be a $U$-unitary Cayley graph. Suppose that $\Gamma(R,U)$ is connected. Then the following conditions are equivalent. 
    \begin{enumerate}
        \item $\Gamma(R,S)$ is connected. 
        \item $R=\sum_{i} I_i$ where the sum is over all ideals $I_i$ such that $S \cap K_i \neq \emptyset.$
    \end{enumerate}
    Furthermore, if one of these conditions are satisfied, then 
    \[ {\rm diam}(\Gamma(R,S)) \leq {\rm diam}(\Gamma(R,U)) t.\]
    Here $t$ is the smallest positive integer in which there exists $i_1, i_2, \ldots, i_t$ such that $R=\sum_{s=1}^t I_{i_s}$ and $S \cap K_{i_s} \neq \emptyset.$
\end{prop}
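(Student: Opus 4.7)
The plan is to reduce connectedness of $\Gamma(R,S)$ to the statement that $S$ additively generates $R$, then leverage the decomposition $S = \bigsqcup_{i : S \cap K_i \neq \emptyset} K_i$ together with the identity $I_i = Rs$ valid for any $s \in K_i$. For $(1) \Rightarrow (2)$, if $\Gamma(R,S)$ is connected then $S$ additively generates $R$; since each $s \in S$ lies in some $K_i \subseteq I_i$ with $K_i \cap S \neq \emptyset$, the ideal $\sum_{i : S \cap K_i \neq \emptyset} I_i$ contains $S$, and being an additive subgroup of $R$, it therefore contains all of $R$. The reverse inclusion is automatic.

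For $(2) \Rightarrow (1)$, I would show directly that $S$ additively generates $R$. Given $r \in R$, the hypothesis lets me write $r = \sum_{i} r_i x_i$ with $r_i \in R$ and $x_i \in K_i \subseteq S$. The key step now invokes connectedness of $\Gamma(R,U)$: each coefficient $r_i$ equals a finite sum of elements of $U$ (the symmetry needed for backtracking along paths is automatic since $-1 \in U$). Then $r_i x_i = \sum_{j} u_{ij} x_i$, and the $U$-stability of $K_i$ forces each $u_{ij} x_i \in K_i \subseteq S$. Concatenating over $i$ exhibits $r$ as a sum of elements of $S$, which is the desired additive generation.

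The diameter bound is a quantitative refinement of this last argument. Fixing $i_1,\ldots,i_t$ as in the statement and writing $d = {\rm diam}(\Gamma(R,U))$, any $r \in R$ admits a decomposition $r = \sum_{s=1}^{t} r_s x_{i_s}$ with $r_s \in R$. By definition of the diameter in $\Gamma(R,U)$, each (nonzero) $r_s$ is a sum of at most $d$ elements of $U$, and after multiplying by $x_{i_s}$ and applying $U$-stability of $K_{i_s}$, each $r_s x_{i_s}$ becomes a sum of at most $d$ elements of $S$. Concatenating yields $r$ as a sum of at most $td$ elements of $S$, and vertex-transitivity of the Cayley graph gives ${\rm diam}(\Gamma(R,S)) \leq td$.

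The genuine obstacle is the $(2) \Rightarrow (1)$ direction: the hypothesis $R = \sum I_i$ only provides decompositions with \emph{ring} coefficients $r_i \in R$, which a priori have nothing to do with additive generation by $S$. Connectedness of $\Gamma(R,U)$ is exactly the input needed to convert each $r_i$ into a finite sum of units from $U$, at which point $U$-stability of $K_i$ reroutes multiplication by $x_i$ into summation inside $S$. This is the one place where both the additive and multiplicative structures of $R$ must be used in concert, echoing the thematic point made in the introduction.
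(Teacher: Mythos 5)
Your proposal is correct and follows essentially the same route as the paper's own proof: both directions reduce connectedness to additive generation by $S$, and the key step in $(2)\Rightarrow(1)$ --- writing each ring coefficient $r_i$ as a sum of at most ${\rm diam}(\Gamma(R,U))$ elements of $U$ and then absorbing the products $u_{ij}x_i$ back into $K_i\subseteq S$ via $U$-stability --- is exactly the paper's argument, including the resulting diameter bound ${\rm diam}(\Gamma(R,S))\leq {\rm diam}(\Gamma(R,U))\,t$.
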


\begin{proof}
    The proof for this statement is almost identical to the one given in \cite[Proposition 3.1]{nguyen2024gcd}. For the sake of completeness, we provide the details below.

    We first shows that $(1) \implies (2).$ In fact, since $\Gamma(R,S)$ is connected, we can write $1 = \sum_{i} a_i s_i,$ 
    where $a_{i} \in \Z$ and $s_{i} \in S.$ By definition of $I_i$, we know that $\sum_{i} a_\ell s_\ell \in \sum_{i} I_i$. This shows that $1 \in \sum_{i} I_i$, which implies that $R=\sum_{i} I_i.$ 
    Conversely, suppose that $(2)$ holds. We will show that $\text{diam}(\Gamma(R,S)) \leq \text{diam}(\Gamma(R,U)) t.$ This will particularly show that $\Gamma(R,S)$ is connected. In fact, let $r \in R.$ Since $R=\sum_{s=1}^t I_{i_s}$, we can write  $r= \sum_{s=1}^t a_{i_s} x_{i_s}$, where $x_{i_s} \in K_{i_s}$ and $a_{i_s} \in R.$ Furthermore, since $\Gamma(R,U)$ is connected, each $a_{i_s}$ can be written as a sum of at most $\text{diam}(\Gamma(R,U))$ elements in $U$. Since each $K_{i_s}$ is stable under the action of $U$, this shows that $r$ can be written as a $\Z$-linear combination of at most $\text{diam}(\Gamma(R,U)) t$ elements in $S.$ By definition, $\text{diam}(\Gamma(R,S)) \leq \text{diam}(\Gamma(R,U)) t$.
\end{proof}

\begin{rem}
    Here, for simplicity, we assume that $\Gamma(R,U)$ is connected. This condition is not always satisfied. We refer the reader to \cite[Section 3]{unitary} and \cite[Theorem 3.6]{nguyen2024gcd} for an extensive analysis of \cref{prop:connectedness} in the \textit{maximal} case; namely $U=R^{\times}.$ 
    
    We also remark that the case $U=(R^{\times})^p$ where $p$ is a positive number that is invertible in $R$ is treated in \cite{podesta2021_finitefield}. In this case, the connectedness of $\Gamma(R, (R^{\times})^p)$ is directly related to the classical Waring problem. The more general case is treated in \cite{nguyen2024certain}. 
\end{rem}

\subsection{Primeness of $U$-unitary Cayley graphs}

A subset $X$ in a graph $G$ is called a homogeneous set if every vertex in $V(G) \setminus X$ is adjacent to either all or none of the vertices in $X$. Note that $X=V(G)$ as well as all vertex sets of size at most one are homogeneous sets. A homogeneous set $X$ with $2 \leq X < |V(G)|$ is called non-trivial. The graph $G$ is said to be prime if it does not contain any non-trivial homogeneous sets. We remark that the study of homogeneous sets is essentially equivalent  to the problem of decomposing a graph $G$ into smaller subgraphs (see \cite{barber2021recognizing,chudnovsky2024prime}). The later problem is a central problem in network theory, and it has found applications to dynamics of phase oscillators in networked systems (see \cite{jain2023composed, nguyen2022broadcasting}). 

In this section, we study the conditions for a $U$-unitary Cayley graph $\Gamma(R,S)$ be prime. Note that since a connected component of a graph is always a homogeneous set, we can safely assume that $\Gamma(R,S)$ is connected and anti-connected (meaning that its complement is connected). We have the following proposition, which is a direct generalization of \cite[Theorem 4.1]{chudnovsky2024prime}.
\begin{prop} \label{prop:homogeneous_ideal}
Suppose that $\Gamma(R,U)$ is connected. Suppose further that $\Gamma(R,S)$ is both connected and anti-connected. Then, the following conditions are equivalent. 
\begin{enumerate}
    \item $\Gamma(R,S)$ is not a prime graph. 
    \item There exists a proper ideal $I$; namely $I \neq 0$ and $I \neq R$, in $R$ such that $I$ is a homogeneous set in $\Gamma(R,S).$
\end{enumerate}
\end{prop}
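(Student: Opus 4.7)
The direction $(2) \Rightarrow (1)$ is immediate: a proper ideal $I$ with $\{0\} \subsetneq I \subsetneq R$ satisfies $2 \leq |I| \leq |R|-1$, so if $I$ is homogeneous it is a non-trivial homogeneous set and $\Gamma(R,S)$ fails to be prime.

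For $(1) \Rightarrow (2)$, following the blueprint of \cite[Theorem 4.1]{chudnovsky2024prime}, start with any non-trivial homogeneous set $X$. Translation by any element of $X$ is a graph automorphism of the Cayley graph $\Gamma(R,S)$, so one may assume $0 \in X$. The key structural step is to show that the additive subgroup $H := \langle X \rangle_+$ generated by $X$ is itself a homogeneous set. Fix $r \notin H$; since $X \subseteq H$, one has $r \notin X$, so by homogeneity of $X$ the element $r$ is uniformly adjacent (or uniformly non-adjacent) to every $x \in X$. In the uniformly adjacent case, any $h \in H$ may be expressed as $h = x_1 + \cdots + x_k$ with $x_i \in X$ (possible in a finite abelian group since $0 \in X$), and an induction on $k$ shows $r - h \in S$: at each inductive step the partial sum $r - (x_1+\cdots+x_j)$ cannot lie in $H$ (otherwise $r$ would, contradicting $r \notin H$), hence lies outside $X$, and the homogeneity of $X$ applied at this partial sum propagates the adjacency condition to $r - (x_1+\cdots+x_{j+1})$.

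The remaining task is to promote the homogeneous additive subgroup $H$ to an ideal. By \cref{prop:automorphism}, multiplication by each $u \in U$ is a graph automorphism of $\Gamma(R,S)$, so $uH$ is again a homogeneous additive subgroup containing $0$. Combining these $U$-translates appropriately---either by passing to $\langle \bigcup_{u \in U} uX\rangle_+$ and rerunning the inductive argument, or by iterated intersection---one constructs a $U$-stable homogeneous additive subgroup $I$. Because $\Gamma(R,U)$ is connected, $U$ generates $R$ as an abelian group, so every $r \in R$ may be written as $r = \sum_i u_i$ with $u_i \in U$; for $h \in I$ one then has $rh = \sum_i u_i h \in I$ by $U$-stability and additive closure, making $I$ an ideal.

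The main obstacle lies in the $U$-stabilization step: ensuring that the resulting $U$-stable homogeneous subgroup is proper, i.e., neither $\{0\}$ nor $R$. Concrete examples (such as $X = \{0,2\} \subset \Z/8$) show that the naive intersection $\bigcap_u uX$ can collapse to $\{0\}$, so one must work with the additive closure $\langle \bigcup_u uX\rangle_+$ instead; the case $I = R$ is in turn ruled out by the connectedness and anti-connectedness hypotheses on $\Gamma(R,S)$, which prevent a non-trivial homogeneous set from additively generating the entire ring (as that would force $S$ or its complement to be empty, making $\Gamma(R,S)$ complete or empty). These hypotheses, together with the connectedness of $\Gamma(R,U)$, are precisely what make the promotion from an arbitrary homogeneous set to a proper homogeneous ideal possible.
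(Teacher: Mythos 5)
Your blueprint is the same as the paper's: translate a non-trivial homogeneous set so that it contains $0$, upgrade it to an additive subgroup, use the $U$-action from \cref{prop:automorphism} to make it $U$-stable, and then use the connectedness of $\Gamma(R,U)$ to write each $r\in R$ as a $\Z$-combination of elements of $U$ and conclude the subgroup is an ideal. Your induction showing that $\langle X\rangle_+$ is homogeneous whenever it is a \emph{proper} subgroup is correct, and it amounts to reproving the subgroup statement that the paper simply imports from \cite[Theorem 3.4]{chudnovsky2024prime}. The direction $(2)\Rightarrow(1)$ is fine.

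The genuine gap is the properness step. You must rule out $\langle X\rangle_+=R$ and, later, $\langle\bigcup_{u\in U}uX\rangle_+=R$, and your only justification is the assertion that a non-trivial homogeneous set cannot additively generate $R$ ``as that would force $S$ or its complement to be empty.'' No argument is given for this, and none can be extracted from what you have proved: your induction is silent exactly when $\langle X\rangle_+=R$, because it depends on every partial sum $r-(x_1+\cdots+x_j)$ lying \emph{outside} $\langle X\rangle_+$. The missing ingredient is the Gallai-type fact that in a connected and anti-connected graph two overlapping proper homogeneous sets cannot have union equal to the whole vertex set; this is precisely where the hypotheses on $\Gamma(R,S)$ do their work, and it is the content of the result the paper cites. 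The standard repair is the paper's: take $M$ a \emph{maximal} non-trivial homogeneous set containing $0$; for $m\in M$ the translate $M-m$ is a non-trivial homogeneous set meeting $M$ at $0$, so $M\cup(M-m)$ is homogeneous, the overlapping-sets lemma forces $M\cup(M-m)\neq R$, and maximality gives $M-m=M$, so $M$ is a proper subgroup; then $M\cup uM\neq R$ is automatic (a group is never the union of two proper subgroups of equal order), so maximality gives $uM=M$ and the ideal promotion goes through. Until you prove or cite that lemma, the properness claim --- and with it the whole of $(1)\Rightarrow(2)$ --- is unsupported.
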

\begin{proof}
    This statement can be proved using an identical argument as explained in \cite[Theorem 4.1]{chudnovsky2024prime}. For the sake of completeness, we briefly sketch it here. 

    By \cite[Theorem 3.4]{chudnovsky2024prime}, if $I$ is a maximal non-trivial homogeneous set of $\Gamma(R,S)$ containing $0$, then $I$ is a subgroup of $(R, +).$  We claim that $I$ is an ideal in $R$ as well. By \cref{prop:automorphism}, we know that  if $u \in U$, then the multiplication by $u$ is an automorphism of $\Gamma(R,S).$ Consequently, $uI$ is also a homogeneous set. Since $0 \in I \cap uI$, we conclude that $I \cup uI$ is also a homogeneous set. By the maximality of $I$, we must have $uI=I.$ We claim if $r \in R$, then $rI=I$. In fact, since $\Gamma(R,U)$ is connected, we can write
$r = \sum_{i=1}^d m_i u_i,$ where $m_i \in \Z$ and $u_i \in U.$ For each $h \in I$, we have  $rh = \sum_{i=1}^d m_i (u_i h).$
 Since $u_i h \in I$ and $I$ is a subgroup of $(R, +)$, we conclude that $rh \in I.$ This shows that $I$ is an ideal in $R$. 
\end{proof}

We discuss some conditions on the existence of an ideal which is simultaneously a homogeneous set in $\Gamma(R,S).$ First, we remark that an ideal $I$ in $R$ is necessarily stable under the action of $S.$ Therefore, $I$ is a union of various orbits $K_i.$ 

\begin{prop} \label{prop:homogeneous}
Suppose that $I$ is an ideal which is also a homogeneous set in $\Gamma(R,S).$ Let $K \in \{K_1, K_2, \ldots, K_m \}$ be an orbit. Then the following properties hold.

\begin{enumerate}
    \item If $K \subset S$ and $K \not \subset I$ then $K + I \subset S.$
    \item If $K \not \subset S$ and $K \not \subset I$ then $K+ I \subset R \setminus S.$
\end{enumerate}
Conversely, if $I$ is an ideal satisfying both of the above conditions then $I$ is a homogeneous set in $\Gamma(R,S).$
\end{prop}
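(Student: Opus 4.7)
The plan is to translate the homogeneity condition on $I$ into a coset condition and to exploit that, since $I$ is an ideal, it is stable under multiplication by $U\subset R^\times$ and hence is a union of $U$-orbits. Concretely, for $r\in R\setminus I$ and $i\in I$, adjacency of $r$ and $i$ in $\Gamma(R,S)$ amounts to $r-i\in S$; since $I=-I$ as an additive subgroup, as $i$ ranges over $I$ the difference $r-i$ ranges over the coset $r+I$. So $I$ is homogeneous if and only if for every $r\in R\setminus I$ either $r+I\subset S$ or $(r+I)\cap S=\emptyset$. Moreover, because $I$ is an ideal we have $UI\subset I$, giving the orbit dichotomy $K\subset I$ or $K\cap I=\emptyset$; similarly, $U$-unitarity of $\Gamma(R,S)$ gives $K\subset S$ or $K\cap S=\emptyset$.

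For the forward direction, fix $k\in K$ and $i\in I$; both statements (1) and (2) hinge on the intermediate claim $k+i\notin I$. Indeed, if $k+i\in I$ then $k=(k+i)-i\in I$, which by the orbit dichotomy forces $K\subset I$, contradicting $K\not\subset I$. Now $(k+i)-i=k$, so $k+i$ is adjacent to $i$ precisely when $k\in S$. In case (1), $K\subset S$ gives $k\in S$, so $k+i\in R\setminus I$ is adjacent to $i\in I$; by homogeneity of $I$, $k+i$ is then adjacent to $0\in I$ as well, yielding $k+i\in S$. In case (2), $K\cap S=\emptyset$ gives $k\notin S$, so $k+i$ is not adjacent to $i$; homogeneity then forces non-adjacency with $0$, hence $k+i\notin S$.

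For the converse, take $r\in R\setminus I$ and let $K$ be the orbit of $r$, so $K\not\subset I$. If $K\subset S$, then hypothesis (1) gives $K+I\subset S$, so $r-I=r+I\subset K+I\subset S$ and $r$ is adjacent to every element of $I$. If instead $K\cap S=\emptyset$, hypothesis (2) gives $K+I\subset R\setminus S$, so $r$ is adjacent to no element of $I$. This is exactly the dichotomy defining a homogeneous set.

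The main obstacle is essentially already in the setup: extracting the orbit dichotomy $K\subset I$ or $K\cap I=\emptyset$ from the ideal structure and using it to show $k+i\notin I$ whenever $K\not\subset I$. Once this is in hand, both directions are immediate by testing the homogeneity condition of $I$ at the vertex $k+i$ against the pair $\{0,i\}\subset I$.
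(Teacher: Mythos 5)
Your proof is correct and follows essentially the same route as the paper's: both arguments use the homogeneity of $I$ to transfer adjacency between $0$ and another element of $I$ (the paper tests the vertex $k$ against $\{0,-m\}$, you test $k+i$ against $\{i,0\}$, which is the same computation). You are somewhat more careful than the paper in making explicit the orbit dichotomies and the verification that the witness vertex lies outside $I$, and in writing out the converse, which the paper leaves to the reader.
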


\begin{proof}
    We will provide a proof for $(1)$ since $(2)$ follows from an identical argument (we can also use the fact that if $I$ is a homogeneous set in $\Gamma(R,S)$, then $I$ is also a homogeneous set in its complement, which is also a $U$-unitary Cayley graph by \cref{prop:complement_U}).

    Suppose that $K \subset S$ and $K \not \subset I$. Let $k \in K.$ Since $k$ is adjacent to $0$, it is adjacent to all vertices in $I.$ This shows that for each $m \in I$, $k$ is adjacent to $-m$ (since $-m \in I$). By definition, this shows that $k+m \in S.$ Because this is true for all $k \in K$ and $m \in I$, we conclude that $K+I \subset S.$

    The converse statement follows directly from the definition of a homogeneous set. 
\end{proof}

In particular, when $S=U$, since $U \cap I = \emptyset$, we have the following corollary, which is first proved in \cite{chudnovsky2024prime}.

\begin{cor} \label{cor:homogeneous_unit} (see \cite[Proposition 4.4, Proposition 4.7]{chudnovsky2024prime})
Suppose that $I$ is an ideal which is also a homogeneous set in $\Gamma(R,U).$ Then $I+U = U.$ Furthermore, $I$ is a nilpotent ideal in $R.$
\end{cor}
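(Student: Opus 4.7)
The plan is to apply \cref{prop:homogeneous} directly for the first assertion. Since $U$ is a multiplicative subgroup of $R^{\times}$ containing $1$, the orbit of $1$ under the multiplication action of $U$ on $R$ is $U \cdot 1 = U$ itself, so $U$ appears as one of the orbits $K_i$ in the partition $\mathcal{K}$. As $I$ is a proper ideal of $R$ (the assertion is only of interest in this case), it can contain no units, so $U \cap I = \emptyset$; in particular the orbit $K = U$ satisfies $K \subset S = U$ and $K \not\subset I$. Applying \cref{prop:homogeneous}(1) then yields $U + I \subset U$, and the reverse inclusion is immediate from $0 \in I$, so $I + U = U$.

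For the nilpotence, the plan is to show that $I$ lies inside the Jacobson radical of $R$ and then invoke the finiteness of $R$. Since $1 \in U$, for every $i \in I$ we have $1 + i \in U + I = U \subset R^{\times}$, so $1 + i$ is a unit. Given an arbitrary $r \in R$, the element $-ir$ again lies in $I$ because $I$ is an ideal, so $1 - ir = 1 + (-ir)$ is also a unit. This is exactly the standard characterization of membership in the Jacobson radical $J(R)$, so $I \subset J(R)$. Because $R$ is finite it is Artinian, and the Jacobson radical of a commutative Artinian ring is nilpotent, whence $I$ itself is nilpotent.

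Neither step presents a real obstacle: the first assertion is essentially a direct specialization of \cref{prop:homogeneous} to the case $S = U$, and the second reduces to the classical Jacobson radical criterion combined with Artinianity. The only subtle point worth recording cleanly is the identification of $U$ with a single orbit of the $U$-action on $R$, which follows immediately from $U$ being closed under multiplication and containing $1$.
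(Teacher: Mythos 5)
Your proof is correct and follows essentially the same route as the paper: the identity $I+U=U$ is obtained exactly as the paper intends, by specializing \cref{prop:homogeneous}(1) to $S=U$ with the orbit $K=U\cdot 1=U$ and the observation that a proper ideal meets no units, so $U\cap I=\emptyset$. For the nilpotence the paper simply defers to the cited reference, whereas you supply the standard self-contained argument ($1+I\subset U\subset R^{\times}$ forces $I$ into the Jacobson radical, which is nilpotent since $R$ is finite, hence Artinian); this is a perfectly adequate substitute, and your explicit remark that the statement should be read for proper $I$ (it fails for $I=R$, which is a trivial homogeneous set) is a worthwhile clarification.
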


\begin{rem}
    The condition that $U+I =U$ is equivalent to a \textit{weaker} condition; namely $1+I \subset U.$ In fact, if $I+U =U$ then clearly $1+I \subset U$ since $1 \in U.$ Conversely, suppose that $1+I \subset U.$ Let $u \in U$ and $m \in I$, we claim that $u+m \in U.$ In fact, we can rewrite $u+m = u(1+u^{-1}m)$. Since $u^{-1}m \in I$, we know that $1+u^{-1}m \in U.$ Since $U$ is a group, $u(1+u^{-1}m)$ belongs to $U$ as well. 
\end{rem}

\section{Suppercharacter theory of a Frobenius ring and applications to spectra of $U$-Unitary Cayley graphs} \label{sec:frobenius}

\subsection{Supercharacter theories of a finite Frobenius ring}
In \cref{sec:U_unitary}, we show that for each choice of $U \subset R^{\times}$, there is a natural partition of $R$ into superclasses. Namely, these superclasses are precisely the orbits of $R$ under the natural action of $U.$ In order to apply the results for graph spectra in \cref{sec:general_theory}, we also need a compatible partition of the dual group of $(R,+).$ While it is unclear how to achieve this in full generality, there is a class of rings in which such a partition naturally exists; namely the class of finite commutative Frobenius rings. It is worth noting that many rings with arithmetic origins are Frobenius rings. Examples include finite fields, finite quotients of Dedekind domains, finite chain rings, and the group ring of an abelian group with coefficients in a finite field. In \cite{nguyen2024gcd}, we show that every finite commutative ring is a quotient of a Frobenius ring. We also provide various constructions of Frobenius ring extensions.

We now fix a Frobenius ring $R$. As explained in \cite{honold2001characterization, nguyen_pst}, being Frobenius means that $R$ is a $\Z/n$ algebra equipped with a non-degenerate $\Z/n$ linear functional $\psi \colon R \to \Z/n.$ Here, non-degenerate means that the kernel of $\psi$ does not contain any non-zero ideal in $R.$ Let $\zeta_n:= e^{\frac{2 \pi \bm{i}}{n}}$ be a fixed primitive $n$th-root of unity and $\chi\colon R \to \C^{\times}$ be the character defined by $\chi(a) =\zeta_n^{\psi(a)}.$ By \cite[Proposition 2.4]{nguyen2024integral}, the dual group $\Hom(R, \C^{\times})$ is a cyclic $R$-module generated by $\chi$; namely every character of $R$ is of the form $\chi_r$ where $\chi_r(a)=\chi(ra).$ By definition, the following identity holds for all $x, y \in R.$
\[ \chi_x(y) = \chi_y(x) = \chi_{xy}(1) =\chi(xy).\]

We now show that each subgroup of $R^{\times}$ gives rise to a supercharacter theory for $R.$

\begin{thm} \label{thm:induced_supercharacters}
    Let $U$ be a subgroup of $R^{\times}$ such that $-1 \in U.$ Let $\mathcal{K} = \{K_1, K_2, \ldots, K_m\}$ be the orbits of $R$ under the action of $U.$ Additionally, let $\mathcal{X} = \{X_1, X_2, \ldots, X_m\}$ be the partition of the character group of $R$ defined by 
    \[ X_i = \{\chi_{x} \mid x \in K_i \}.\]
Then the pair $(\mathcal{K}, \mathcal{X})$ is a symmetric supercharacter theory for $R.$ Furthermore, $(\mathcal{K}, \mathcal{X})$ satisfies Condition $4$ in \cref{def:supercharacter}.
\end{thm}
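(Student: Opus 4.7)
The plan is to verify the four axioms of \cref{def:supercharacter} together with the symmetry condition $K_i=-K_i$. The engine throughout is the identity $\chi_x(y) = \chi(xy)$, which converts the multiplicative $U$-action on $R$ into an action on characters; combined with the Frobenius-ring hypothesis, this will give both well-definedness of $\mathcal{X}$ and the constancy properties.

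First I would dispatch the structural items. Since $U\cdot 0 = \{0\}$, the singleton $\{0\}$ is a $U$-orbit, so $\{0\}\in\mathcal{K}$. Because $-1\in U$ and orbits are $U$-stable, $-K_i = (-1)K_i = K_i$, giving symmetry of $\mathcal{K}$. For axiom (2), I need $|\mathcal{X}|=|\mathcal{K}|$, which reduces to showing that the map $R\to\widehat{R}$, $x\mapsto \chi_x$, is a bijection, so that the $X_i$ are genuinely a partition of $\widehat{R}$ with $|X_i|=|K_i|$. This is where the Frobenius hypothesis enters: if $\chi_r$ is the trivial character then $\psi(rR)=0$, so the ideal $(r)$ lies in $\ker\psi$; non-degeneracy then forces $r=0$. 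Hence the map is injective, and injectivity plus the equality $|R|=|\widehat{R}|$ for finite abelian groups gives the bijection.

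Next I would verify axiom (3), the superclass-constancy of $\sigma_i$. Fix $k\in K_j$ and $u\in U$; then
\[
\sigma_i(uk)=\sum_{x\in K_i}\chi_x(uk)=\sum_{x\in K_i}\chi(xuk)=\sum_{y\in uK_i}\chi(yk)=\sum_{y\in K_i}\chi(yk)=\sigma_i(k),
\]
the penultimate equality using $uK_i=K_i$. Axiom (4) is then obtained by an essentially dual computation: for $\chi_x,\chi_{ux}\in X_i$ with $u\in U$,
\[
\Omega_j(\chi_{ux})=\sum_{k\in K_j}\chi(uxk)=\sum_{k'\in uK_j}\chi(xk')=\sum_{k'\in K_j}\chi(xk')=\Omega_j(\chi_x),
\]
using $uK_j=K_j$. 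So both constancy statements reduce to the single observation that each $K_i$ is $U$-stable, which is the defining property of an orbit.

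The only genuinely nontrivial step is the bijectivity $R\simeq\widehat{R}$ via $x\mapsto\chi_x$; everything else is formal once the $U$-action has been transferred to characters through $\chi_x(y)=\chi(xy)$. I would therefore open the proof with the Frobenius-theoretic input, then present axioms (1), symmetry, (2), (3), (4) in that order, highlighting that (3) and (4) are two faces of the same invariance calculation. No further ring-theoretic machinery should be required.
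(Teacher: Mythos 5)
Your proof is correct and follows essentially the same route as the paper: conditions (3) and (4) are verified by the identical reindexing argument (replacing $x$ by $ux$, resp.\ $k$ by $uk$, and using $uK_i=K_i$), and symmetry comes from $-1\in U$. The only difference is that you explicitly justify condition (2) by proving $x\mapsto\chi_x$ is a bijection via the non-degeneracy of $\psi$ --- a step the paper dismisses as ``automatic'' (it relies on the cited fact that $\widehat{R}$ is a cyclic $R$-module generated by $\chi$) --- so this is a welcome filling-in of detail rather than a different approach.
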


\begin{proof}
    The first two conditions in \cref{def:supercharacter} are automatic. For the third and fourth conditions, we adopt the following convention. 
Fix a set of representatives $\{r_1, r_2, \ldots, r_m \}$ for $\{K_1, K_2, \ldots, K_m\}$. 
Let $r_j'$ be an arbitrary element in $K_j.$ Then $r_j'=s r_j$ for some $s \in U.$ We have
\[
\sigma_i(r'_j)=\sum_{\psi\in X_i}\psi(r'_j)=\sum_{x\in K_i}\chi_x(sr_j)=\sum_{x\in K_i}\chi_{sx}(r_j)=\sum_{x\in K_i}\chi_x(r_j)=\sum_{\psi\in X_i}\psi(r_j)=\sigma_i(r_j).
\]
This shows that $\sigma_i$ is constant on $K_j$ for each $1 \leq j \leq m.$ In other words, the third condition in \cref{def:supercharacter} is satisfied. 

Now we check the fourth condition in \cref{def:supercharacter}. Let $\omega$ be an arbitrary element in $X_j$. Then $\omega=\chi_{x_j}$ for some $x_j\in K_j$. We can write $x_j= sr_j$ for some $s\in U$. Hence $\psi=\chi_{sr_j}$. We have
\[
\Omega_i(\omega)= \sum_{k\in K_i}\omega(k)=\sum_{k\in K_i}\chi_{sr_j}(k)=\sum_{k\in K_i}\chi_{r_j}(sk)=\sum_{k\in K_i}\chi_{r_j}(k)=\Omega(\chi_{r_j}).
\]
This shows that $\Omega_i(\omega)$ does not depend on the choice of $\omega$ in $X_j$, for each $1 \leq j \leq m.$ 

Finally, since $-1 \in U$, $K_i = -K_i$ for each $1 \leq i \leq m.$ By definition, $(\mathcal{K}, \mathcal{X})$ is symmetric. 
 \end{proof}

We now discuss several properties of this supercharacter theory which generalize various results in \cite{brumbaugh2014supercharacters}. 

\begin{prop} \label{prop:properties_characters}
    Let $U$ be a subgroup of $R^{\times}$ and $(\mathcal{K}, \mathcal{X})$ the supercharacter theory associated with $U$ as described in \cref{thm:induced_supercharacters}. Keeping the same notation as in the proof of \cref{thm:induced_supercharacters}. Then the following relations hold 
    \begin{enumerate}
        \item $\Omega_j(X_i)= \sigma_j(K_i). $
        \item We have 
        \[ \sigma_i(X_j)|K_j| = \sigma_j(X_i) |K_i|. \] 
        Note that this identity is a natural generalization of \cite[Lemma 1]{brumbaugh2014supercharacters}.
        \item Let $f\colon R \to \C$ be a superclass function. Let $\widehat{f}$ be the non-normalized super-Fourier transform of $f$ described in \cref{eq:fourier}. Then \cref{eq:explicit Fourier} can be rewritten as  
        \[ \widehat{f}(K_i) = \sum_{\ell=1}^m f(K_{\ell}) \sigma_{\ell}(K_i) = \sum_{\ell=1}^m f(K_{\ell}) \overline{\sigma_{\ell}(K_i)}. \]
        In other words 
        \[ \widehat{f} = \sum_{\ell =1}^m f(K_\ell) \sigma_\ell. \]
        We remark that this formula is a generalization of \cite[Equation 3.2]{brumbaugh2014supercharacters}.
        \item Let $\Sp$ be the space of all superclass functions on $R$ associated with $(\mathcal{K}, \mathcal{X}.)$ Let $\mathcal{F}\colon \Sp \to \Sp$ be the super-Fourier transform. Then $\mathcal{F}^2(f)=|R|f$ for all $f \in \mathcal{S}.$
    \end{enumerate}
\end{prop}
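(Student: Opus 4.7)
My plan is to handle the four items in order, the unifying tool being the Frobenius parametrization $\chi_y(x)=\chi(xy)$ of characters of $R$ together with the description of the $X_j$'s in \cref{thm:induced_supercharacters}.

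For (1), I would unfold both sides directly. Pick $y\in K_i$; since every element of $X_j$ is of the form $\chi_z$ with $z\in K_j$, the two sums in question become
\[
\Omega_j(X_i)=\sum_{k\in K_j}\chi_y(k)=\sum_{k\in K_j}\chi(yk), \qquad \sigma_j(K_i)=\sum_{z\in K_j}\chi_z(y)=\sum_{z\in K_j}\chi(zy),
\]
and they coincide termwise. Along the way I would record the fact, needed below, that the map $K_i\to X_i$, $x\mapsto\chi_x$, is a bijection: surjectivity is built into the definition of $X_i$, and injectivity follows because $\chi_x=\chi_y$ forces the principal ideal $(x-y)\subseteq\ker\chi$, which by non-degeneracy of $\psi$ collapses to $0$. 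In particular $|K_i|=|X_i|$ for every $i$.

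For (2), I would simply feed (1) into \cref{prop:relation}. That proposition gives $|X_i|\,\Omega_j(X_i)=|K_j|\,\sigma_i(K_j)$; substituting $\Omega_j(X_i)=\sigma_j(K_i)$ and using $|X_i|=|K_i|$ yields the stated symmetric identity $|K_i|\,\sigma_j(K_i)=|K_j|\,\sigma_i(K_j)$, which is the promised generalization of \cite[Lemma 1]{brumbaugh2014supercharacters}. Part (3) then follows at once: replacing $\Omega_\ell(X_i)$ by $\sigma_\ell(K_i)$ in \cref{eq:explicit Fourier} gives the first form, and the symmetry $K_i=-K_i$ gives $\overline{\sigma_\ell(K_i)}=\sigma_\ell(-K_i)=\sigma_\ell(K_i)$, so the two expressions for $\widehat{f}(K_i)$ agree and rearrange pointwise into $\widehat{f}=\sum_\ell f(K_\ell)\sigma_\ell$ as functions on $R$.

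For (4), my strategy is to identify $\mathcal{F}$ with the usual additive Fourier transform on $R$, pulled back along $y\leftrightarrow\chi_y$. Set $\widetilde{\mathcal{F}}(f)(y):=\sum_{x\in R}f(x)\chi(xy)$. For $f\in\mathcal{S}$ and $y\in K_i$, a short rearrangement shows $\widetilde{\mathcal{F}}(f)(y)=\sum_\ell f(K_\ell)\Omega_\ell(X_i)=\widehat{f}(K_i)$, so $\widetilde{\mathcal{F}}(f)$ and $\widehat{f}$ agree as superclass functions on $R$. Iterating,
\[
\widetilde{\mathcal{F}}^{2}(f)(z)=\sum_{x\in R}f(x)\sum_{y\in R}\chi(y(x+z))=|R|\,f(-z),
\]
because the inner sum equals $|R|\,\delta_{x+z,0}$ by orthogonality of characters, using non-degeneracy of $\chi$ so that $\chi_t$ is non-trivial whenever $t\neq 0$. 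Since each superclass is symmetric, $f(-z)=f(z)$, and $\mathcal{F}^{2}(f)=|R|f$ as claimed. The step I expect to need most care is this last one, where one must set up the identification of $\mathcal{F}$ with a classical Fourier transform and carefully track the factor of $|R|$; item (2), while formally short, is the place where the Frobenius hypothesis enters decisively through $|K_i|=|X_i|$, and in a general supercharacter theory the identity would have to carry $|X_\bullet|$ rather than $|K_\bullet|$.
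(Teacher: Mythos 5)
Your proposal is correct and follows essentially the same route as the paper: part (1) is the same direct computation $\Omega_j(X_i)=\sum_{x\in K_j}\chi_x(r_i)=\sigma_j(K_i)$, and parts (2)--(3) are obtained, as in the paper, by feeding (1) into \cref{prop:relation} together with $|K_i|=|X_i|$ and into \cref{eq:explicit Fourier}. The only substantive difference is part (4), which the paper dispatches by citing the argument of \cite{brumbaugh2014supercharacters}; you instead write out that argument in full, identifying $\mathcal{F}$ with the classical additive transform $f\mapsto\sum_{x\in R}f(x)\chi(x\,\cdot)$ and invoking orthogonality of characters plus the symmetry $K_i=-K_i$ --- this is the intended argument, and your explicit verification that $x\mapsto\chi_x$ is a bijection $K_i\to X_i$ (so that $|K_i|=|X_i|$, the point where the Frobenius hypothesis enters) is a detail the paper leaves implicit.
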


\begin{proof}
 Let us prove the first statement.   By definition, we have 
    \[ \Omega_j(X_i) = \sum_{x \in K_j} \chi_{r_i}(x) = \sum_{x \in K_j} \chi_x(r_i) = \sum_{\psi \in X_j} \psi(r_i) = \sigma_j(K_i). \]
    This proves the first statement. The second statement follows from the first part,  \cref{prop:relation}, and the fact that $|K_i|=|X_i|$ and $|K_j|=|X_j|$. The third statement is a consequence of the first statement and the formula for $\widehat{f}$ described in \cref{eq:explicit Fourier}. Finally, last statement follows from an identical to the one given in \cite{brumbaugh2014supercharacters}. 
\end{proof}

We now discuss a method to systematically calculate $\sigma_{\ell}(K_i).$ To do so, we introduce the following convention. Let $V_x$ (respectively $U_x$) be the kernel (respectively the image) of $U$ under the canonical map $R^{\times} \to (R/\Ann_{R}(x))^{\times}$. Here, for each subset $M \subset R$, $\Ann_R(M)$ is the annihilator ideal of $M$; namely 
\[ \Ann_{R}(M) = \{y \in R \mid yM =0 \}.\]

We remark that $R/\Ann_{R}(x)$ is a Frobenius ring and $\chi_x$ is one of its generating characters (see \cite[Lemma 4.3]{nguyen2024gcd}). For convenience,  we introduce the following notation. 
\begin{definition} \label{def:generalized_ramanujans}
Let $\varphi \in \widehat{R}= \Hom(R, \C^{\times})$ be an additive character of $R.$ The generalized Ramanujan sum $c(R,U, \varphi)$ is defined as follows 
    \[ c(R, U, \varphi) = \sum_{u \in U} \varphi(u).\]
\end{definition}
\begin{rem}
    When $U=R^{\times}$ and $\varphi = \chi_g$ then $c(R,U, \chi_g)$ is precisely the Ramanujan sum $c(g, R)$ introduced in \cite[Definition 4.8]{nguyen2024gcd}. 
\end{rem}

We have the following proposition, which shows that $\sigma_{\ell}(K_i)$ can be calculated via a Ramanujan sum of certain quotient ring of $R.$

\begin{prop} \label{prop:formula_sigma}
    \[ \sigma_{\ell}(K_i) = \frac{|U_{r_\ell}|}{|U_{r_\ell r_j}|} \sum_{u \in U_{r_{\ell} r_j}} \chi_{r_l r_i}(u) = \frac{|U_{r_\ell}|}{|U_{r_\ell r_j}|} c(R/\Ann_{r_{\ell} r_i}(R), U_{r_{\ell}r_i}, \chi_{r_{\ell} r_i}). \] 
\end{prop}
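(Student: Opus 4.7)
The plan is to reduce $\sigma_\ell(K_i)$ to a sum over $U$ (with multiplicity), recognize that the resulting character factors through a quotient ring, and then interpret the descended sum as the generalized Ramanujan sum of \cref{def:generalized_ramanujans}.

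First I would invoke part (1) of \cref{prop:properties_characters} to rewrite $\sigma_\ell(K_i) = \Omega_\ell(X_i)$. Because the supercharacter theory from \cref{thm:induced_supercharacters} satisfies Condition~4, I may evaluate $\Omega_\ell$ on any representative of $X_i$; the natural choice is $\chi_{r_i}$. This yields
\[ \sigma_\ell(K_i) \;=\; \sum_{k \in K_\ell} \chi_{r_i}(k). \]

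Next I would parametrize $K_\ell$ by the orbit map $U \to K_\ell$, $u \mapsto u r_\ell$. The stabilizer of $r_\ell$ under this action is precisely $V_{r_\ell}$, the kernel of $U \to (R/\Ann_R(r_\ell))^\times$, so each point of $K_\ell$ is hit exactly $|V_{r_\ell}|$ times and
\[ \sum_{k \in K_\ell} \chi_{r_i}(k) \;=\; \frac{1}{|V_{r_\ell}|} \sum_{u \in U} \chi_{r_i}(u r_\ell) \;=\; \frac{1}{|V_{r_\ell}|} \sum_{u \in U} \chi_{r_\ell r_i}(u), \]
where the last equality uses the identity $\chi_x(y)=\chi(xy)=\chi_{xy}(1)$ noted at the start of \cref{sec:frobenius}.

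Now the function $u \mapsto \chi_{r_\ell r_i}(u)$ depends only on the image of $u$ in $(R/\Ann_R(r_\ell r_i))^\times$, since $\chi_{r_\ell r_i}$ vanishes on $\Ann_R(r_\ell r_i)$ by construction. Hence the inner sum descends to $U_{r_\ell r_i}$, each fiber of $U \to U_{r_\ell r_i}$ having size $|V_{r_\ell r_i}|$:
\[ \sum_{u \in U} \chi_{r_\ell r_i}(u) \;=\; |V_{r_\ell r_i}| \sum_{\bar u \in U_{r_\ell r_i}} \chi_{r_\ell r_i}(\bar u). \]
Combining these and applying Lagrange's theorem to the ratio $|V_{r_\ell r_i}|/|V_{r_\ell}| = |U_{r_\ell}|/|U_{r_\ell r_i}|$ gives the first equality of the proposition. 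The second equality is then immediate from \cref{def:generalized_ramanujans}, once one recalls (as noted before \cref{def:generalized_ramanujans}) that $R/\Ann_R(r_\ell r_i)$ is Frobenius with generating character $\chi_{r_\ell r_i}$.

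The only nontrivial step is the bookkeeping in the third paragraph: correctly identifying the two stabilizers $V_{r_\ell}$ and $V_{r_\ell r_i}$ as kernels of the appropriate maps out of $U$, and verifying that $\chi_{r_\ell r_i}$ genuinely factors through $R/\Ann_R(r_\ell r_i)$. Everything else is formal manipulation of sums, so I do not expect any substantive obstacle.
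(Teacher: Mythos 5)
Your proposal is correct and follows essentially the same route as the paper: both arguments parametrize the orbit $K_\ell$ by $U$ via orbit--stabilizer (identifying the stabilizer with $V_{r_\ell}$, the kernel of $U \to (R/\Ann_R(r_\ell))^{\times}$), use the symmetry $\chi_{ur_\ell}(r_i) = \chi_{r_\ell r_i}(u)$, and then descend the sum over $U$ to $U_{r_\ell r_i}$ with fibers of size $|V_{r_\ell r_i}|$. The only cosmetic difference is that you first pass to $\Omega_\ell(X_i)=\sum_{k\in K_\ell}\chi_{r_i}(k)$ via \cref{prop:properties_characters}(1) while the paper works directly with $\sigma_\ell(K_i)=\sum_{\chi\in X_\ell}\chi(r_i)$; these are mirror images under the same identity $\chi_x(y)=\chi_y(x)$.
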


\begin{proof}

For each $x \in R$, let $\text{Stab}(x)$ be the stabilizer of $x$. We then have  
\[ \text{Stab}(x) = \{u \in U| ux =x \} = \{u \in U| (u-1) \in \Ann_{R}(x) \} = V_x.  \]
We have \[ |\text{Stab}(x)| = |V_x| = \frac{|U|}{|U_x|}. \]
For each $y \in R$, we denote by $K_y$  the class $K_i$ such that $K_i$ contains $y.$
As a result, for $x, y \in R$ we have 
\[ \sum_{\chi \in K_x} \chi_x(y)=\frac{1}{|\text{Stab}(x)|} \sum_{u \in U} \chi_{ux}(y) = \frac{|U_x|}{|U|} \sum_{u \in U} \chi_{ux}(y) = \frac{|U_x|}{|U|}  \sum_{u \in U} \chi_{xy}(u). \]
Consequently, we have  
\begin{align*}
    \sigma_\ell(K_i) &  = \frac{|U_{r_\ell|}}{|U|}  \sum_{u \in U} \chi_{r_\ell r_i}(u) = \frac{|U_{r_\ell}|}{|U|} \frac{|U|}{|U_{{r_{\ell} r_i}}|} \sum_{u \in U_{r_\ell r_i}} \chi_{r_\ell r_i}(u) = \frac{|U_{r_\ell}|}{|U_{r_\ell r_i}|} c(R/\Ann_{r_{\ell} r_i}(R), U_{r_{\ell}r_i}, \chi_{r_{\ell} r_i}).
\end{align*}
\end{proof}

\subsection{Applications to $U$-unitary Cayley graphs}
In this section, we use the results developed in the previous parts to study $U$-unitary Cayley graph. By \cref{thm:induced_supercharacters}, we know that $U$ gives rises to a supercharacter theory $(\mathcal{K}, \mathcal{X}).$ Furthermore, by definition, a $U$-unitary Cayley graph is precisely a super-graph with respect to this supercharacter theory. By \cref{prop:eigenvalues}, \cref{prop:relation}, and \cref{prop:formula_sigma} we have the following. 

\begin{thm} \label{thm:main_spectra}
    Let $\Gamma(R,S)$ be a $U$-unitary Cayley graph. Then, the spectrum of $\Gamma(R,S)$ is the multiset $\{[\lambda_{r_i}]_{|K_r|} \}_{i=1}^m.$ Here 
    \[ \lambda_{r_i}  = \widehat{1}_{S}(K_i) = \sum_{K_\ell \subset S} \Omega_\ell(X_i) = \sum_{K_\ell \subset S} \sigma_\ell(K_i) = \sum_{K_\ell \subset S}\frac{|U_{r_\ell}|}{|U_{r_\ell r_i}|} c(R/\Ann_{r_{\ell} r_i}(R), U_{r_{\ell}r_i}, \chi_{r_{\ell} r_i}).\]
    In particular, $\Gamma(R,S)$ has at most $m$ distinct eigenvalues. Furthermore, $S$ is determined by the $m$-dimensional vector $(\lambda_{r_i})_{i=1}^m.$
\end{thm}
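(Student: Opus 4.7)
The plan is to assemble \cref{thm:main_spectra} from the general super-Cayley machinery of \cref{sec:general_theory} together with the ring-theoretic input we developed in \cref{thm:induced_supercharacters} and \cref{prop:formula_sigma}. First I would observe that by \cref{thm:induced_supercharacters}, the pair $(\mathcal{K},\mathcal{X})$ built from the $U$-orbits on $R$ is a symmetric supercharacter theory on the abelian group $(R,+)$. By \cref{def:u-unitary}, the condition that $\Gamma(R,S)$ be a $U$-unitary Cayley graph is exactly the condition that $S$ be a union of superclasses, which by \cref{defn:super_cayley} means $\Gamma(R,S)$ is a super-Cayley graph with respect to $(\mathcal{K},\mathcal{X})$. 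This reduces the problem to an application of the general results of \cref{sec:general_theory}.

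Next, I would invoke \cref{prop:eigenvalues} (or equivalently the theorem preceding the corollary on uniqueness) to obtain the spectrum as the multiset $\{[\lambda_{r_i}]_{|X_i|}\}_{i=1}^m$ with $\lambda_{r_i} = \widehat{1_S}(K_i) = \sum_{K_\ell \subset S}\Omega_\ell(X_i)$, recording the multiplicity as $|X_i|$, which equals $|K_i|$ for the supercharacter theory of \cref{thm:induced_supercharacters}. This already gives the first two equalities. The third equality $\Omega_\ell(X_i) = \sigma_\ell(K_i)$ is precisely part (1) of \cref{prop:properties_characters}, while the fourth equality, rewriting $\sigma_\ell(K_i)$ as a generalized Ramanujan sum of the quotient ring $R/\Ann_R(r_\ell r_i)$, is just \cref{prop:formula_sigma}. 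Chaining these identities yields the displayed formula for $\lambda_{r_i}$.

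Finally, the two auxiliary assertions follow cleanly. The bound on the number of distinct eigenvalues is immediate: the eigenvalues are indexed by $\mathcal{K}$, which has cardinality $m$. For the uniqueness of $S$ given the vector $(\lambda_{r_i})_{i=1}^m$, I would appeal to the corollary following the main theorem of \cref{sec:general_theory}: the super-Fourier transform $\mathcal{F}\colon \Sp \to \Sp$ is an isomorphism, so the characteristic function $1_S \in \Sp$ (which lies in $\Sp$ precisely because $S$ is a union of superclasses) is recovered from its transform $\widehat{1_S}$, and hence $S$ is determined by the tuple $(\lambda_{r_i})$.

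I do not expect any real obstacle: the hard work has already been done in \cref{thm:induced_supercharacters} (existence of the supercharacter theory) and in \cref{prop:formula_sigma} (the explicit Ramanujan-sum expression). The only subtle bookkeeping point is to verify carefully that the multiplicity $|X_i|$ appearing in \cref{prop:eigenvalues} really coincides with $|K_i|$ in our setting, which follows from the fact that $X_i = \{\chi_x : x \in K_i\}$ and the map $x \mapsto \chi_x$ is injective on orbits (a consequence of the non-degeneracy of the generating character $\chi$ of the Frobenius ring $R$).
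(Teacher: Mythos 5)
Your proposal is correct and follows essentially the same route as the paper, which likewise derives \cref{thm:main_spectra} by combining \cref{thm:induced_supercharacters}, the identification of $U$-unitary Cayley graphs with super-Cayley graphs, \cref{prop:eigenvalues}, the duality relation $\Omega_\ell(X_i)=\sigma_\ell(K_i)$, and \cref{prop:formula_sigma}. Your added check that $|X_i|=|K_i|$ via injectivity of $x\mapsto\chi_x$ is a worthwhile detail the paper leaves implicit.
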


\subsubsection{\textbf{Rationality of the spectra of $U$-unitary Cayley graphs.}}
A graph is called integral if all of its eigenvalues are integers. In \cite{nguyen2024integral}, we classify all integral Cayley graphs defined over a finite Frobenius ring. We provide here a slight generalization. 
\begin{prop} \label{prop:rationality}
Let $\Gamma(R,S)$ be an $U$-unitary Cayley graph.    Let $H$ be the preimage of $U$ under the canonical map $(\Z/n)^{\times} \to R^{\times}.$ Then the eigenvalues of $\Gamma(R,S)$ belong to the fixed  field $\Q(\zeta_n)^{H}$. In particular, if $H = (\Z/n)^{\times}$ then all eigenvalues of $\Gamma(R,S)$ are integers, i.e., $\Gamma(R,S)$ is an integral graph. 
\end{prop}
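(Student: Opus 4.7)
The plan is to work directly with the eigenvalue formula from \cref{thm:main_spectra}. Since $\Gamma(R,S)$ is a super-Cayley graph for the supercharacter theory coming from $U$, each eigenvalue $\lambda_{r_i}$ equals the character sum
\[
\lambda_{r_i} = \sum_{s \in S} \chi_{r_i}(s) = \sum_{s \in S} \zeta_n^{\psi(r_i s)},
\]
which a priori lies in $\Q(\zeta_n)$. So I would first observe that $\lambda_{r_i} \in \Q(\zeta_n)$ and then show it is fixed by the subgroup $H \subset (\Z/n)^\times = \Gal(\Q(\zeta_n)/\Q)$.

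Next I would recall that the Galois action sends $\zeta_n \mapsto \zeta_n^a$ for $a \in (\Z/n)^\times$, so for such an $a$,
\[
\sigma_a(\lambda_{r_i}) = \sum_{s \in S} \zeta_n^{a\,\psi(r_i s)} = \sum_{s \in S} \zeta_n^{\psi(a r_i s)} = \sum_{s \in S} \chi_{r_i}(a s),
\]
using the fact that $\psi$ is $\Z/n$-linear. Here $a s$ denotes the image of $s$ under multiplication by the integer $a$ in the $\Z/n$-algebra $R$. By hypothesis, if $a \in H$ then its image under the canonical map $(\Z/n)^\times \to R^\times$ lies in $U$, say equals $u$; then $a s = u s$ in $R$ for every $s \in S$. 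Since $\Gamma(R,S)$ is $U$-unitary we have $u S = S$, so the sum reindexes to give $\sigma_a(\lambda_{r_i}) = \sum_{s' \in S} \chi_{r_i}(s') = \lambda_{r_i}$.

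Combining these observations, $\lambda_{r_i}$ is fixed by every element of $H$, hence $\lambda_{r_i} \in \Q(\zeta_n)^H$ by the Galois correspondence. The special case $H = (\Z/n)^\times$ gives $\lambda_{r_i} \in \Q$; since eigenvalues of an integer matrix are algebraic integers, any rational eigenvalue is in fact an integer, so $\Gamma(R,S)$ is integral.

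I do not anticipate a serious technical obstacle here: the one place to be careful is confirming that the action of $a \in (\Z/n)^\times$ on $\zeta_n^{\psi(r_i s)}$ really corresponds to multiplication by the image of $a$ in $R^\times$ acting on $s$, and this is immediate from the $\Z/n$-linearity of $\psi$ together with the definition of the map $(\Z/n)^\times \to R^\times$. The rest is a direct application of the $U$-stability of $S$ built into \cref{def:u-unitary}.
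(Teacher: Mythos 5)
Your proposal is correct and follows essentially the same route as the paper: express each eigenvalue as the character sum $\sum_{s\in S}\chi_r(s)$, note that $\sigma_a(\chi_r(s))=\chi_r(as)$, and use the $U$-stability of $S$ to reindex the sum for $a\in H$, concluding that the eigenvalue lies in $\Q(\zeta_n)^{H}$. The only (harmless) additions are that you spell out the $\Z/n$-linearity of $\psi$ behind the identity $\sigma_a(\chi_r(s))=\chi_r(as)$ and make explicit the standard ``rational algebraic integer is an integer'' step for the integral case.
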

\begin{proof}
    By \cref{prop:eigenvalues}, we know that the spectrum of $\Gamma(R,S)$ is the multiset $\{\lambda_r\}_{r \in R}$ where 
$\lambda_r = \sum_{s \in S} \chi_r(s).$ We remark also that $\lambda_r \in \Q(\zeta_n).$ Furthermore, the Galois group $\Gal(\Q(\zeta_n)/\Q)$ is precisely $(\Z/n)^{\times}$. More precisely, there exists an isomorphism $\Phi\colon (\Z/n)^{\times} \to \Gal(\Q(\zeta_n)/\Q)$ defined by $a \mapsto \sigma_a$ where $\sigma_a$ is defined by the rule $\sigma_a(\zeta_n) = \zeta_n^a.$ 
We can see that for each character $\chi_r$ of $R$, we have $\sigma_a(\chi_r(s)) = \chi_{ar}(s)=\chi_r(as).$ Let $h \in H$, then we have 
\[ 
\sigma_h(\lambda_r) = \sum_{s \in S} \sigma_h(\chi_r(s)) = \sum_{s \in S} \chi_r(hs) = \sum_{s \in S} \chi_r(s). \]
The last equality follows from the fact that $S$ is stable under the action of $U$ and hence is also stable under the action of $H.$ This shows that $\lambda_r$ belongs to the fixed field $\Q(\zeta_n)^{H}$ for each $r \in R.$  
\end{proof}

We study the converse of \cref{prop:rationality}. The naive guess does not quite work as a graph could be both $U_1$-unitary and $U_2$-unitary for two different $U_1, U_2$. For example, if $\Gamma(G,S)$ is $U$-unitary, then it is also $U_1$-unitary for any subgroup $U_1$ of $U.$  To avoid this tautological issue, we introduce the following definition. 

\begin{definition}
    A Cayley graph $\Gamma(R,S)$ is called purely $U$-unitary if $\Gamma(R,S)$ is not a $U'$-unitary Cayley graph for some $U' \subset R^{\times}$ such that $U \subsetneq U'.$
\end{definition}

The following lemma says that the induced action described above is the only way for a purely $U$-unitary Cayley graph to have a different unitary structure. 
\begin{lem} \label{lem:purely_U}
    Let $\Gamma(R,S)$ be a purely $U$-unitary Cayley graph. Suppose that $\Gamma(R,S)$ is also a $U_1$-unitary Cayley graph. Then $U_1 \subset U.$
\end{lem}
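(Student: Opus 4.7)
The plan is to form the subgroup of $R^{\times}$ generated by $U$ and $U_1$, show that $\Gamma(R,S)$ is unitary with respect to this larger subgroup, and then invoke the purity hypothesis to force the containment. Concretely, I would set $U' = U \cdot U_1 = \{u v : u \in U, v \in U_1\}$. Since $R$ is commutative, $R^{\times}$ is abelian, so $U'$ is an honest subgroup of $R^{\times}$ (closed under products and inverses because both $U$ and $U_1$ are). Moreover $-1 \in U \subset U'$, so $U'$ satisfies the standing hypothesis we place on the unitary subgroup.

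Next I would verify that $U'S = S$, i.e., that $\Gamma(R,S)$ is a $U'$-unitary Cayley graph. Given $u \in U$, $v \in U_1$, and $s \in S$, we have $vs \in S$ (since $U_1 S = S$), and hence $u(vs) \in S$ (since $US = S$). Thus $(uv)s \in S$, showing $U'S \subseteq S$. The reverse inclusion follows because $1 \in U'$.

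Now the purity of $U$ finishes the argument. We have $U \subseteq U'$ and $\Gamma(R,S)$ is a $U'$-unitary Cayley graph. By definition of purely $U$-unitary, any subgroup $U''$ of $R^{\times}$ that strictly contains $U$ cannot make $\Gamma(R,S)$ into a $U''$-unitary Cayley graph. Therefore $U' = U$, which yields $U_1 \subseteq U' = U$, as desired.

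There is no real obstacle here; the argument is essentially a one-line application of the universal property implicit in the definition of \emph{purely} $U$-unitary. The only point to be a little careful about is ensuring that the join $U \cdot U_1$ is literally a subgroup of $R^{\times}$ containing $-1$, so that it qualifies as a legitimate candidate in the definition of purity; both are immediate from commutativity of $R^{\times}$ and the assumption $-1 \in U$.
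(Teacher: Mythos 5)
Your argument is correct and is essentially the paper's own proof: the paper likewise forms the product $U_1 U$, notes that $S$ is stable under it, and invokes purity to conclude $U_1 U \subseteq U$. Your extra care in checking that $U \cdot U_1$ is a genuine subgroup containing $-1$ is a welcome (if minor) addition that the paper leaves implicit.
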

\begin{proof}
    By definition, $S$ is stable under the action of $U_1$ and $U.$ Consequently, it is stable under the action of the product $U_1 U.$ Since $\Gamma(R,S)$ is purely $U$-unitary, we must have $U_1U \subset U.$ This implies that $U_1 \subset U_2.$
\end{proof}

\begin{cor}
Let $\Gamma(R,S)$ be a Cayley graph. Then, there exists a unique subgroup $U \subset R^{\times}$ such that $\Gamma(R,S)$ is purely $U$-unitary. 
\end{cor}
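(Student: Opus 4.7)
The plan is to exhibit the desired subgroup explicitly as the full stabilizer of $S$ in $R^{\times}$, and then leverage \cref{lem:purely_U} for uniqueness. Concretely, I would set
\[ U := \{ u \in R^{\times} \mid uS = S \}. \]
First I would verify that $U$ is a subgroup of $R^{\times}$: it is closed under multiplication (if $uS=S$ and $vS=S$ then $uvS=uS=S$), contains $1$, and since $R^{\times}$ is finite, $uS \subseteq S$ forces $uS=S$ so $u^{-1}S = S$ as well. Moreover $-1 \in U$ because $S$ is symmetric, so $U$ satisfies the standing hypothesis on subgroups considered in \cref{def:u-unitary}.

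Next I would observe that, directly from its definition, $U$ is the largest subgroup of $R^{\times}$ stabilizing $S$: any subgroup $U'$ of $R^{\times}$ with $U'S = S$ is by definition contained in $U$. This immediately gives two things. First, $\Gamma(R,S)$ is $U$-unitary. Second, if $\Gamma(R,S)$ were $U''$-unitary for some $U'' \supsetneq U$, then in particular $U''$ would stabilize $S$, contradicting the maximality of $U$. Hence $\Gamma(R,S)$ is \emph{purely} $U$-unitary in the sense of the definition.

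For uniqueness, suppose $\Gamma(R,S)$ is purely $U_1$-unitary and purely $U_2$-unitary for two subgroups $U_1, U_2 \subseteq R^{\times}$. Then $\Gamma(R,S)$ is in particular $U_2$-unitary, so by \cref{lem:purely_U} applied to the purely $U_1$-unitary graph $\Gamma(R,S)$, we obtain $U_2 \subseteq U_1$. Symmetrically $U_1 \subseteq U_2$, hence $U_1 = U_2$. Combining existence with uniqueness yields the claim.

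The argument is essentially formal; the only mild subtlety is confirming that the maximal-stabilizer subgroup $U$ constructed above coincides with the one produced abstractly by \cref{lem:purely_U}, but this is immediate since both are characterized as the largest subgroup of $R^{\times}$ under which $S$ is stable. No significant obstacle is anticipated.
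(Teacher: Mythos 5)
Your proposal is correct and follows essentially the same route as the paper: both define $U$ as the full stabilizer $\{u \in R^{\times} \mid uS = S\}$, observe it is the largest subgroup stabilizing $S$ (hence $\Gamma(R,S)$ is purely $U$-unitary), and deduce uniqueness from this maximality. Your explicit invocation of \cref{lem:purely_U} for the uniqueness step and the check that $-1 \in U$ are minor elaborations of details the paper leaves implicit.
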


\begin{proof}
    Let us define 
    \[ U = \{ u \in R^{\times} \mid uS = S .\}. \]
    We can check that $U$ is a subgroup of $R^{\times}.$ Furthermore, by definition of $U$ we know that $US=S$. Therefore, we conclude that $\Gamma(R,S)$ is $U$-unitary. Let $U_1$ be a subgroup of $R^{\times}$ such that $S$ is $U_1$-unitary. Then, for each $u_1 \in U_1$, we must have $u_1 S= S.$ By definition of $U$, we know that $u_1 \in U.$ Therefore, $U_1 \subset U.$ We conclude that $\Gamma(R,S)$ is purely $U$-unitary.   
\end{proof}

With these setups, we now generalize \cref{cor:integral_graph} to a broader statement concerning the rationality of the spectra of $\Gamma(R,S).$ We formally introduce the following definition. 

\begin{definition}
    Let $K$ be a subfield of $\C.$ A graph $G$ is called $K$-rational if all eigenvalues of $G$ belong to $K.$ Note that a graph is integral if and only if  it is $\Q$-rational. 
\end{definition}

We begin with the following observation, which generalizes \cite[Theorem 2.7]{nguyen2024integral}. Although the argument we present is similar to those in \cite{nguyen2024integral, so2006integral}, we will explain it within the context of supercharacter theory on $R.$

\begin{thm} \label{thm:general_rationality}
Let $T$ be a symmetric subset of $R$ and $\Gamma(R,T)$ the Cayley graph associated with $T$. Let $K$ be a subfield of $\Q(\zeta_n)$ and $H_1$ the group associated with $K$ under the Galois correspondence; namely $K = \Q(\zeta_n)^{H_1}.$  Then, the following conditions are equivalent.
\begin{enumerate}
    \item  $\Gamma(R,T)$ is $K$-rational. 
    \item $\Gamma(R,T)$ is an $U_1$-unitary Cayley graph where $U_1$ is the image of $H_1$ in $R^{\times}$ under the canonical map $(\Z/n)^{\times} \to R^{\times}.$

\end{enumerate}

\end{thm}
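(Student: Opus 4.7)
The plan is to compute how the Galois group acts on the eigenvalues of $\Gamma(R,T)$ and then use Fourier inversion on the additive group of $R$ to convert Galois invariance into multiplicative stability of $T$. Since $R$ is Frobenius with generating character $\chi(a)=\zeta_n^{\psi(a)}$, every additive character of $R$ has the form $\chi_r$ with $r\in R$. By the circulant diagonalization theorem, the eigenvalues of $\Gamma(R,T)$ are the values
\[ \lambda_r \;=\; \sum_{s\in T}\chi_r(s) \;=\; \sum_{s\in T}\zeta_n^{\psi(rs)}, \qquad r\in R, \]
which all lie in $\Q(\zeta_n)$. Writing $\pi\colon (\Z/n)^{\times}\to R^{\times}$ for the canonical map and using $\Z/n$-linearity of $\psi$, the Galois automorphism $\sigma_h$ corresponding to $h\in (\Z/n)^{\times}$ acts by
\[ \sigma_h(\lambda_r) \;=\; \sum_{s\in T}\zeta_n^{h\psi(rs)} \;=\; \sum_{s\in T}\chi_r(\pi(h)s) \;=\; \sum_{s'\in\pi(h)T}\chi_r(s'). \]

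\textbf{Direction $(2)\Rightarrow(1)$.} Assume $T$ is $U_1$-stable. For every $h\in H_1$ we have $\pi(h)\in U_1$, so $\pi(h)T=T$, and the displayed identity above gives $\sigma_h(\lambda_r)=\lambda_r$. Thus each $\lambda_r$ is fixed by $H_1$, hence lies in $\Q(\zeta_n)^{H_1}=K$. (This is essentially \cref{prop:rationality} applied to $U=U_1$, together with the observation that $\ker\pi$ acts trivially on $R$ by the $\Z/n$-algebra structure.)

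\textbf{Direction $(1)\Rightarrow(2)$.} Suppose every $\lambda_r$ lies in $K=\Q(\zeta_n)^{H_1}$. Fix $h\in H_1$. Applying $\sigma_h$ to $\lambda_r$ and using the formula above yields
\[ \sum_{s'\in\pi(h)T}\chi_r(s') \;=\; \sigma_h(\lambda_r) \;=\; \lambda_r \;=\; \sum_{s\in T}\chi_r(s) \qquad\text{for all } r\in R. \]
Therefore the characteristic functions $\mathbf{1}_T$ and $\mathbf{1}_{\pi(h)T}$ on $R$ have the same additive Fourier transform. Because $R$ is Frobenius, the map $r\mapsto\chi_r$ is an isomorphism $R\to\widehat{R}$, so $\{\chi_r\}_{r\in R}$ is an orthogonal basis of $\C^R$ and the Fourier transform is injective. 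Hence $\mathbf{1}_T=\mathbf{1}_{\pi(h)T}$, i.e.\ $\pi(h)T=T$. Letting $h$ range over $H_1$ gives $U_1 T=T$, so $\Gamma(R,T)$ is a $U_1$-unitary Cayley graph.

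\textbf{Main obstacle.} The only nonroutine ingredient is the Fourier-inversion step in the harder direction; it relies essentially on the Frobenius hypothesis, which is what makes $r\mapsto\chi_r$ an isomorphism and hence guarantees that a function on $R$ is determined by its character sums. A small subtlety to track is that $H_1\subset(\Z/n)^{\times}$ and $U_1=\pi(H_1)\subset R^{\times}$ need not have the same size: the kernel of $\pi$ acts trivially on $R$ but nontrivially on $\zeta_n$. This is harmless because in $(2)\Rightarrow(1)$ we only need Galois fixedness under $H_1$ (and we get a little more, landing in $\Q(\zeta_n)^{\pi^{-1}(U_1)}\subset K$), while in $(1)\Rightarrow(2)$ only the image $U_1$ on $R$ matters, so the asymmetry washes out.
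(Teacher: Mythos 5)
Your proof is correct. The direction $(2)\Rightarrow(1)$ is handled exactly as in the paper, by invoking \cref{prop:rationality}. For the harder direction $(1)\Rightarrow(2)$ you take a genuinely more direct route. The paper introduces the $K$-vector space $\mathcal{V}$ spanned by the characteristic vectors of the $U_1$-orbits and the space $\mathcal{A}$ of vectors in $K^{|R|}$ whose image under the DFT matrix $A_R$ again has entries in $K$; it proves $\mathcal{V}\subseteq\mathcal{A}$ and $A_R\mathcal{A}\subseteq\mathcal{V}$, deduces $\mathcal{A}=\mathcal{V}$ from the invertibility of $A_R$, and then observes that $1_T\in\mathcal{A}=\mathcal{V}$, i.e.\ $T$ is a union of orbits. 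You instead apply each $\sigma_h$, $h\in H_1$, directly to the eigenvalue vector, obtain $A_R\, 1_T=A_R\, 1_{\pi(h)T}$, and conclude $1_T=1_{\pi(h)T}$ from the injectivity of the additive Fourier transform. Both arguments rest on the same two ingredients---the identity $\sigma_h(\lambda_r)=\sum_{s\in T}\chi_r(\pi(h)s)$ and the invertibility of $A_R$, the latter guaranteed by the Frobenius hypothesis that $r\mapsto\chi_r$ exhausts $\widehat{R}$---but yours is shorter and works on the ``input'' side ($T$ is stable under $\pi(h)$), whereas the paper's works on the ``output'' side (the eigenvalue vector is constant on $H_1$-orbits of the index $r$) and yields as a byproduct the stronger characterization $\mathcal{A}=\mathcal{V}$ of all $K$-vectors with $K$-rational Fourier transform. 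Your closing remark about $\ker\pi$ acting trivially on $R$ but not on $\zeta_n$ is accurate and, as you note, does not affect either direction.
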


\begin{proof}
By \cref{prop:rationality} we know that $(2) \implies (1).$ Let us show that $1) \implies (2).$

Let $(\mathcal{K}_1, \mathcal{X}_1)$ be the supercharacter theory associated with $U_1.$ In particular, let $\mathcal{K}_1 =\{O_1, O_2, \ldots, O_s\}$ be the orbits of $R$ under the action of $U_1.$  Let ${v}_i \in K^{|R|}$ be the characteristic vector of $O_i$; namely 
\[ {v}_i[r] = 
\begin{cases}
    1, & \text{if } r \in O_i \\
    0, & \text{if } r \not \in O_i.
\end{cases}
\]
By definition, ${v}_1, {v}_2, \ldots, {v}_d$ are linearly independent over $K$. Let $\mathcal{V}$ be the $K$-vector space generated by the ${v}_i$'s.

Let $A_{R}=(\chi_r(t))_{r,t}$ be the DFT matrix associated with $R$ (see \cite{kanemitsu2013matrices}). 
Let us also define 
\[ \mathcal{A} = \{{v} \in K^{|R|} \mid A_{R} {v} \in K^{|R|}\} .\]
We first claim that $\mathcal{V} \subset \mathcal{A}.$ In fact, let ${v} \in \mathcal{V}$. Then $v$ can be written as a $K$-linear combination of $v_i$; namely
$v = \sum_{i=1}^d a_i v_i$. We then have $A_{R}v = \sum_{i=1}^d a_i A_{R}v_i.$ Let $\sigma_a \in \Gal(\Q(\zeta_n)/\Q)$ with $a \in H_1$. Since $\sigma(a_i)=a_i$, we have 
\[ \sigma_a(A_R(v)) = \sum_{i=1}^d a_i \sigma_a(A_R v_i). \]
By definition 
\[ \sigma_a(A_Rv_i) = \left(\sigma_a (\sum_{s \in O_i} \chi_r(s)) \right)_{r \in R} = \left(\sum_{s \in O_i} \chi_r(as) \right)_{r \in R} = \left(\sum_{s \in O_i} \chi_r(s) \right)_{r \in R} = A_Rv_i. \]
The second to the last equality follows from the fact that each $O_i$ is stable under the multiplication by $a \in H_1.$ 

Next, we claim that if $w \in \mathcal{A}$ then $A_{R}w \in \mathcal{V}.$ Let $w=(w_t)_{t \in R} \in K^{|R|}$ and $A_R w = (u_r)_{r \in R}.$ Because $w \in \mathcal{A}$, $u_r \in K$ for all $r$. By definition 
\[ u_r = \sum_{t \in R} \chi_r(t) w_t.\]
We claim that if $r_1,r_2$ belong to the same equivalence class then $u_{r_1} = u_{r_2}.$ In fact, since $r_1, r_2$ belong to the same equivalence class, we can find $a \in H_1$ such that $ar_1 = r_2.$ We then have 
\[ u_{r_1} = \sigma_a(u_{r_1}) =  \sum_{t \in R} \chi_{ar_1}(t) \sigma_a (w_t) =  \sum_{t \in R} \chi_{r_2}(t) w_t = u_{r_2}.\]
In summary, we have $\mathcal{V} \subset \mathcal{A}$, $A_{R} \mathcal{A} \subset \mathcal{V}$. Furthermore, since $A_R$ is invertible, we must have $\mathcal{A} = \mathcal{V}.$

We recall that the eigenvalues of $\Gamma(R,T)$ are precisely $A_R 1_{T}$ where $1_T$ is the characteristic vector of $T.$ By our assumptions, there eigenvalues belong to $K$, we conclude that $T \in \mathcal{V}.$ This shows that $T$ is a union of some classes in $\mathcal{K}_1.$ In other words, $\Gamma(R,S)$ is $U_1$-unitary.     
\end{proof}

\begin{thm} \label{thm:second_rationality}
    Let $\Gamma(R,S)$ be a purely $U$-unitary Cayley graphs. Let $K, H_1, U_1$ be as in \cref{thm:general_rationality}. Suppose further that $K$ is the smallest subfield of $\Q(\zeta_n)$ such that $\Gamma(R,S)$ is $K$-rational. Then, $U_1 = U.$
\end{thm}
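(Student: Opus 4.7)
The plan is to establish the two inclusions $U_1 \subseteq U$ and $U \subseteq U_1$ separately. The first follows from the easy direction of \cref{thm:general_rationality} combined with the purity hypothesis, while the second uses \cref{prop:rationality} together with the minimality of $K$ and the Galois correspondence.

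For $U_1 \subseteq U$: the hypothesis that $\Gamma(R,S)$ is $K$-rational, together with \cref{thm:general_rationality}, immediately implies that $\Gamma(R,S)$ is a $U_1$-unitary Cayley graph. Since $\Gamma(R,S)$ is purely $U$-unitary by assumption, \cref{lem:purely_U} yields $U_1 \subseteq U$.

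For the reverse inclusion $U \subseteq U_1$: let $H$ denote the preimage of $U$ under the canonical map $(\Z/n)^\times \to R^\times$. Because $\Gamma(R,S)$ is itself $U$-unitary, \cref{prop:rationality} shows that every eigenvalue of $\Gamma(R,S)$ lies in the fixed field $\Q(\zeta_n)^H$, so $\Gamma(R,S)$ is $\Q(\zeta_n)^H$-rational. The minimality of $K$ therefore forces $K \subseteq \Q(\zeta_n)^H$, and by the Galois correspondence this translates into the subgroup containment $H \subseteq H_1$. Taking images under $(\Z/n)^\times \to R^\times$, we find that the image of $H$, which recovers $U$, is contained in the image of $H_1$, which is $U_1$ by definition; hence $U \subseteq U_1$.

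The main obstacle---and the step requiring the most care---is the identification of the image of $H$ in $R^\times$ with $U$ itself. This is automatic as soon as $U$ sits inside the image of $(\Z/n)^\times \to R^\times$, which is the natural regime for the Galois-theoretic formalism of \cref{prop:rationality}; outside this regime, the best one can hope for is the weaker identity $U_1 = U \cap \mathrm{im}\bigl((\Z/n)^\times \to R^\times\bigr)$. Once this compatibility is in place, all remaining steps are direct consequences of \cref{thm:general_rationality}, \cref{prop:rationality}, and \cref{lem:purely_U}, and combining the two inclusions finishes the proof.
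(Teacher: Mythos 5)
Your argument is essentially identical to the paper's: the inclusion $U_1\subseteq U$ via \cref{thm:general_rationality} and \cref{lem:purely_U}, and the reverse inclusion via \cref{prop:rationality}, minimality of $K$, and the Galois correspondence. The caveat you raise at the end is not a defect of your write-up but a genuine gap in the theorem as stated: the paper's proof passes from $H\subseteq H_1$ directly to $U\subseteq U_1$, which silently assumes that the image of $H=f^{-1}(U)$ under $f\colon(\Z/n)^{\times}\to R^{\times}$ is all of $U$ rather than $U\cap\mathrm{im}(f)$. A concrete witness: take $R=\F_4$, $n=2$, $U=S=\F_4^{\times}$; then $\Gamma(R,S)$ is the complete graph $K_4$, which is integral, so the minimal field is $K=\Q$, $H_1=(\Z/2)^{\times}=\{1\}$ and $U_1=\{1\}$, yet $\Gamma(R,S)$ is purely $U$-unitary with $|U|=3$. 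So the correct unconditional conclusion is exactly your weaker identity $U_1=U\cap\mathrm{im}\bigl((\Z/n)^{\times}\to R^{\times}\bigr)$, and the theorem as stated needs the additional hypothesis that $U$ lies in that image. Your proof is the paper's proof, plus an honest accounting of the one step the paper elides.
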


\begin{proof}
 By \cref{thm:general_rationality}, since $\Gamma(R,S)$ is $K$-rational, it is $U_1$-unitary. Because $\Gamma(R,S)$ is purely $U$-unitary, \cref{lem:purely_U} implies that $U_1 \subset U$.

Conversely, let $H$ be the preimage of $U$ in $\Z/n^{\times}.$ \cref{prop:rationality} implies that $\Gamma(R,S)$ is $\Q(\zeta_n)^H$-rational. Because $K$ is the smallest subfield where $\Gamma(R,S)$ is $K$-rational, we must have $\Q(\zeta_n)^{H_1} \subset K := \Q(\zeta_n)^{H}.$ By the Galois correspondence, we have $H \subset H_1.$  We conclude that $U \subset U_1$ and hence $U_1 =U.$
\end{proof}
When $K=\Q$, we have the following simple corollary. 

\begin{cor} \label{cor:integral_graph}
Suppose that $\Gamma(R,S)$ is an integral graph. Suppose further that $\Gamma(R,S)$ is purely $U$-unitary. Let $H$ be as in \cref{prop:rationality}. Then $H = (\Z/n)^{\times}.$
\end{cor}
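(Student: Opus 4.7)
The plan is to derive this corollary as essentially an immediate specialization of \cref{thm:second_rationality} to the case $K = \Q$. The point is that integrality of $\Gamma(R,S)$ is, by definition, the statement that $\Gamma(R,S)$ is $\Q$-rational, and $\Q$ is automatically the smallest subfield of $\Q(\zeta_n)$, so the hypothesis ``$K$ is the smallest subfield of $\Q(\zeta_n)$ such that $\Gamma(R,S)$ is $K$-rational'' is fulfilled with no extra work.

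First, I would set $K := \Q$ and invoke the Galois correspondence. The subgroup $H_1 \le (\Z/n)^\times$ associated with $K = \Q$ is the full Galois group $\Gal(\Q(\zeta_n)/\Q) = (\Z/n)^\times$ itself. Consequently $U_1$, defined as the image of $H_1$ under the canonical map $(\Z/n)^\times \to R^\times$, is precisely the image of the whole group $(\Z/n)^\times$ in $R^\times$.

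Next, I would apply \cref{thm:second_rationality} to the purely $U$-unitary graph $\Gamma(R,S)$ with this choice of $K$. The theorem gives the equality $U_1 = U$, which rewrites as: $U$ equals the image of $(\Z/n)^\times$ in $R^\times$. But now the group $H$ of \cref{prop:rationality} is by definition the preimage of $U$ under $(\Z/n)^\times \to R^\times$, and the preimage of the image of a map contains the entire source; thus $H \supseteq (\Z/n)^\times$, and since trivially $H \subseteq (\Z/n)^\times$, we conclude $H = (\Z/n)^\times$, as required.

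Because the statement is a direct corollary, I do not expect any genuine obstacle; the only point that requires a brief mental check is that $\Q$ really is the minimal field in the hypothesis of \cref{thm:second_rationality}, which is automatic since $\Q$ is the prime subfield of $\Q(\zeta_n)$ and any field of characteristic zero contains it. If any polishing is needed, it is just to make the short argument read as a self-contained paragraph rather than a one-line reference.
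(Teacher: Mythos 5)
Your proposal is correct and follows exactly the route the paper intends: the corollary is stated immediately after \cref{thm:second_rationality} as its specialization to $K=\Q$, with $H_1=(\Z/n)^{\times}$, hence $U_1=U$ equal to the image of $(\Z/n)^{\times}$ in $R^{\times}$, and $H$ the preimage of that image, which is all of $(\Z/n)^{\times}$. Your observation that $\Q$ is automatically the minimal field of rationality, so the extra hypothesis of \cref{thm:second_rationality} is satisfied for free, is the only point needing a check and you handle it correctly.
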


\begin{rem} \label{rem:group_ring}
    While we state \cref{thm:general_rationality} and \cref{thm:second_rationality} for finite Frobenius rings, they hold for a general abelian group as well. More precisely, suppose that $G$ is an abelian group of order $n.$ Then $G$ is a $\Z/n$-module and the group $(\Z/n)^{\times}$ acts naturally on $G$ by multiplication. Let $H_1, K$ be as in \cref{thm:general_rationality}. Then, we claim that $\Gamma(G,S)$ is $K$-rational if and only $S$ is stable under the action of $H_1.$

    In fact, there exists an isomorphism of abelian groups $\Phi: G \to \prod_{i} \Z/p_{i}^{\alpha_i}:=R$ where $p_i$ are prime numbers (not necessarily distinct). This isomorphism is compatible with the action of $(\Z/n)^{\times}$ on both sides. Furthermore, there exists an induced isomorphism of graphs $\Phi: \Gamma(G, S) \cong \Gamma(R, \Phi(S)).$ As a result, $\Gamma(R, \Phi(S))$ is $K$-rational if and only if $\Gamma(R,S)$ is. By \cref{thm:general_rationality} applied to $R$, which is a Frobenius ring, we know that this is equivalent to the condition that $\Phi(S)$ is stable under the action of $H_1.$ This in turn, is equivalent to the condition that $S$ is stable under the action of $H_1.$
\end{rem}

\subsection{\textbf{Primeness and eigenvalue $0$}}
In \cite[Corollary 4.3]{chudnovsky2024prime}, we show that if the Cayley graph $\Gamma(R,U)$ is not prime, then $0$ is an eigenvalue with multiplicity at least $\frac{|R|}{2}.$ The argument given there is graph theoretic in nature. Our reformulation of the eigenvalues allows us to provide a more concrete statement. 
\begin{prop} \label{prop:0_multiplicity}
Suppose that $I$ is a non-zero ideal and a homogeneous set in $\Gamma(R,U)$. Let $r \in R$ such that $r \notin \Ann_{R}(I)$. Then $\lambda_r = 0$. Consequently, $0$ is an eigenvalue of $\Gamma(R,S)$ with multiplicity at least $|R|\left(1-\frac{1}{|I|}\right)$, which is at least $\frac{|R|}{2}$.
\end{prop}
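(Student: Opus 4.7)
The plan is to use the structural consequence of the homogeneity hypothesis established in \cref{cor:homogeneous_unit}, namely $U + I = U$, together with the Frobenius non-degeneracy of the generating character $\chi$. Since $I$ is a subgroup of $(R,+)$ and $U + I = U$, we can partition $U$ into cosets of $I$: $U = \bigsqcup_{j=1}^{|U|/|I|}(u_j + I)$. Consequently, the eigenvalue associated to $r \in R$ factors as
\[
\lambda_r \;=\; \sum_{u \in U}\chi_r(u) \;=\; \sum_j \sum_{m \in I}\chi_r(u_j + m) \;=\; \Bigl(\sum_j \chi_r(u_j)\Bigr)\Bigl(\sum_{m \in I}\chi_r(m)\Bigr).
\]

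Next, I would argue that the inner factor $\sum_{m \in I}\chi_r(m)$ vanishes whenever $r \notin \Ann_R(I)$. The restriction $\chi_r|_I$ is a character of the finite abelian group $(I,+)$; by character orthogonality it suffices to show it is non-trivial. Triviality of $\chi_r|_I$ means $\chi(rm)=1$ for all $m \in I$, i.e.\ $rI \subseteq \ker\chi$. Because $\chi$ is a generating character of the Frobenius ring $R$, its kernel contains no non-zero ideal, so $rI \subseteq \ker\chi$ forces $rI = 0$, i.e.\ $r \in \Ann_R(I)$. Contrapositively, $r \notin \Ann_R(I)$ implies $\chi_r|_I$ is non-trivial, hence $\sum_{m \in I}\chi_r(m) = 0$ and therefore $\lambda_r = 0$.

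For the multiplicity statement, I will invoke the Frobenius duality. The pairing $R \times R \to \mathbb{C}^\times$ given by $(x,y)\mapsto \chi(xy)$ is perfect, and the calculation above identifies the orthogonal complement of the ideal $I$ under this pairing as exactly $\Ann_R(I)$. From the perfect-pairing formalism it follows that $|I|\cdot |\Ann_R(I)| = |R|$, so
\[
\#\{r \in R \mid r \notin \Ann_R(I)\} \;=\; |R| - |R|/|I| \;=\; |R|\bigl(1 - 1/|I|\bigr).
\]
Since $I$ is a non-zero ideal we have $|I| \geq 2$, giving the claimed lower bound $|R|/2$ on the multiplicity of the eigenvalue $0$ in $\Gamma(R,U)$.

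The main (modest) obstacle is the clean invocation of the Frobenius non-degeneracy twice, once to rule out the triviality of $\chi_r|_I$ for $r \notin \Ann_R(I)$, and once to identify $\Ann_R(I)$ as the character-theoretic dual of $I$ so as to pin down its cardinality. Both reductions rest on the single input that $\ker\chi$ contains no non-zero ideal, which the Frobenius hypothesis supplies directly.
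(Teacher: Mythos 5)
Your proof is correct and follows essentially the same route as the paper: both arguments rest on the translation-invariance $U+I=U$ coming from the homogeneity of $I$, combined with the non-degeneracy of the generating character to show $\chi_r|_I$ is nontrivial whenever $r\notin \Ann_{R}(I)$, and both count $|R\setminus \Ann_R(I)| = |R|(1-1/|I|)$ using $|I|\,|\Ann_R(I)|=|R|$. The only cosmetic differences are that the paper kills $\lambda_r$ via the one-line identity $\lambda_r=\chi_r(m)\lambda_r$ for a single $m\in I$ with $\chi_r(m)\neq 1$ instead of your coset factorization plus orthogonality over $I$, and it cites Honold's theorem for the cardinality identity where you re-derive it from the perfect pairing.
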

\begin{proof}
    By \cref{prop:homogeneous} we know that $U+I \subset U$. Consequently, for cardinality reasons, $m+U = U$ for each $m \in I.$ Let $r \in R$ such that $r \not \in \Ann_{R}(I).$ This shows that $rI$ is a non-zero ideal of $R$. Because $\chi$ is non-degenerate, there exists $m \in I$ such that $\chi(rm)=\chi_r(m) \neq 1.$ We then have 
    \[ \lambda_r = \sum_{u \in U} \chi_r(u) = \sum_{u \in U} \chi_r(m+u) = \chi_r(m) \sum_{u \in U} \chi_r(u) =\chi_r(m) \lambda_r.\]
    Because $\chi_r(m) \neq 1$, we conclude that $\lambda_r=0.$

    Since $I$ is a non-zero ideal in $R$, $\Ann_{R}(I)$ is a proper ideal of $R$ as well. In fact, by \cite[Theorem 1]{honold2001characterization}, we have the following identity  $|I| |\Ann_{R}(I)|=|R|.$ This shows that the set $R \setminus \Ann_{R}(I)$ has exactly $|R|(1-\frac{1}{|I|})$ elements. In particular, this show that $\lambda_r=0$ for at least $\frac{|R|}{2}$ values of $r.$
\end{proof}
In many examples that we study, the converse of \cref{prop:0_multiplicity} is also true. For this reason,  we propose it as an open question for further investigation.
\begin{question} \label{question:eigen_0}
Suppose that $\Gamma(R,U)$ is both connected and anti-connected. Is it true that $\Gamma(R,U)$ is prime if and only if $0$ is not an eigenvalue? 
\end{question}

We provide some partial answers for \cref{question:eigen_0}.

\begin{prop}
    Suppose that $U=R^{\times}.$ Suppose that $\Gamma(R,R^{\times})$ is connected and anti-connected. Then, $\Gamma(R,R^{\times})$ is prime if and only if $0$ is not an eigenvalue of $\Gamma(R,R^{\times}).$
\end{prop}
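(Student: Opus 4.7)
The $(\Rightarrow)$ direction follows immediately from the earlier results: if $\Gamma(R,R^{\times})$ is not prime, \cref{prop:homogeneous_ideal} supplies a proper non-zero homogeneous ideal $I$, and \cref{prop:0_multiplicity} then forces $0$ into the spectrum. I would therefore focus on the converse: assuming $\lambda_r = 0$ for some $r \in R$, exhibit a non-trivial homogeneous ideal.

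The plan is to exploit the decomposition $R \cong \prod_{i=1}^k R_i$ of the finite commutative Frobenius ring $R$ into local Frobenius factors $(R_i, \mathfrak{m}_i)$. This is compatible both with the generating character of $R$ (which splits coordinatewise as a product of generating characters $\chi^{(i)}$ of the $R_i$) and with the unit group ($R^{\times} = \prod_i R_i^{\times}$). Under this splitting, \cref{thm:main_spectra} gives
\[ \lambda_r \;=\; \sum_{u \in R^{\times}}\chi_r(u) \;=\; \prod_{i=1}^k c_i(r_i), \qquad c_i(r_i) := \sum_{u \in R_i^{\times}}\chi^{(i)}(r_i u), \]
so $\lambda_r = 0$ forces some local factor $c_i(r_i)$ to vanish. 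Writing $c_i(r_i) = \sum_{u \in R_i}\chi^{(i)}(r_i u) - \sum_{u \in \mathfrak{m}_i}\chi^{(i)}(r_i u)$ and using that $\ker \chi^{(i)}$ contains no non-zero ideal of $R_i$, a short case analysis shows $c_i(r_i) = 0$ if and only if $r_i\mathfrak{m}_i \neq 0$. In particular, such an $i$ satisfies $\mathfrak{m}_i \neq 0$, so $R_i$ is not a field.

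Fix such an $i$ and let $I \subset R$ be the ideal that equals $\mathfrak{m}_i$ in the $i$-th coordinate and $0$ elsewhere; then $0 \neq I \subsetneq R$. To verify that $I$ is a homogeneous set in $\Gamma(R,R^{\times})$, take $x = (x_j) \in R \setminus I$ and any $m = (0,\dots,m_i,\dots,0) \in I$. Adjacency $x \sim m$ requires $x_j \in R_j^{\times}$ for all $j \neq i$ and $x_i - m_i \in R_i^{\times}$; since $m_i \in \mathfrak{m}_i$, the latter is equivalent to $x_i \in R_i^{\times}$. Both conditions depend only on $x$, so $x$ is adjacent to every element of $I$ or to none of them. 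Hence $I$ is a non-trivial homogeneous ideal, and by \cref{prop:homogeneous_ideal}, $\Gamma(R,R^{\times})$ is not prime.

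The main subtlety lies in the local Ramanujan-sum computation: the Frobenius hypothesis, via non-degeneracy of $\chi^{(i)}$, is precisely what makes the vanishing criterion $r_i\mathfrak{m}_i \neq 0$ clean, and the edge case $\mathfrak{m}_i = 0$ has to be noted separately (it gives $c_i(r_i) \neq 0$ for every $r_i$, reflecting the fact that a product of fields never yields a zero eigenvalue). Once the local computation is in place, everything else reduces to a routine adjacency check in the product, and the connected/anti-connected hypotheses enter only through the appeal to \cref{prop:homogeneous_ideal}.
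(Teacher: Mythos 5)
Your proof is correct, but for the harder direction it takes a genuinely different route from the paper's. The paper proves ``prime $\Rightarrow$ $0$ is not an eigenvalue'' by invoking the external classification \cite[Theorem 4.34]{chudnovsky2024prime}, which forces a prime, connected, anti-connected $\Gamma(R,R^{\times})$ to have $R$ a product of fields, and then reads off the non-vanishing of the spectrum from the tensor-product-of-complete-graphs description (\cite[Proposition 10.2]{unitary}). You instead prove the contrapositive self-containedly: the local factorization $\lambda_r = \prod_i c_i(r_i)$, the vanishing criterion $c_i(r_i)=0$ exactly when $r_i\mathfrak{m}_i \neq 0$ (which checks out: the full sum over $R_i$ vanishes unless $r_i=0$, the sum over $\mathfrak{m}_i$ vanishes unless $r_i\mathfrak{m}_i=0$ by non-degeneracy of the generating character, and the three resulting cases give $|R_i^{\times}|$, $-|\mathfrak{m}_i|$, and $0$ respectively), and the explicit homogeneous ideal $0\times\cdots\times\mathfrak{m}_i\times\cdots\times 0$, whose homogeneity follows from the local-ring fact that $x_i-m_i$ is a unit iff $x_i$ is. This buys independence from the deep classification theorem --- you only need the easy implication that a non-field local factor produces a homogeneous set --- at the cost of redoing a local character-sum computation the paper gets for free. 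Two cosmetic points: your two directions are labelled in the opposite sense from the statement (what you call $(\Rightarrow)$ is the contrapositive of $(\Leftarrow)$), and the closing appeal to \cref{prop:homogeneous_ideal} is unnecessary, since exhibiting a non-trivial homogeneous set already contradicts primeness by definition.
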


\begin{proof}
One direction follows from \cref{prop:homogeneous} and \cref{prop:0_multiplicity}. Let us now prove the converse; namely if $\Gamma(R,R^{\times})$ is prime then $0$ is not an eigenvalue of $\Gamma(R,R^{\times}).$ In fact, by \cite[Theorem 4.34]{chudnovsky2024prime}, $R$ must be a product of fields. Consequently, $\Gamma(R,R^{\times})$ is a tensor product of complete graphs. As a result, their eigenvalues are non-zero (see also \cite[Proposition 10.2]{unitary}). 
\end{proof}

\begin{rem}
    As noted in \cite{chudnovsky2024prime}, the condition at that $\Gamma(R,R^{\times})$ is both connected and anti-connected is important. In fact, if we take $R$ to be a field, then $\Gamma(R, R^{\times})$ is a complete graph which is not prime unless $|R|=2.$ However, in this case, its eigenvalues are $|R|-1$ and $-1$ which are not zero. 
\end{rem}

We discuss another instant where we can prove \cref{question:eigen_0}. 
\begin{prop} \label{prop:prime_to_p}
    Let $R$ be a $\F_p$-algebra. Let $U \subset R^{\times}$ such that $p \nmid |U|.$ Then, the following statements hold.
    \begin{enumerate}
        \item $0$ is not an eigenvalue of $\Gamma(R,U).$
        \item There is no non-zero ideal $I$ such that $I$ is a homogeneous set in $\Gamma(R,S).$ In particular, if $\Gamma(R,U)$ is connected and anti-connected then $\Gamma(R,U)$ is prime. 
    \end{enumerate}
In particular, if $R$ is a field then these statements hold. 
\end{prop}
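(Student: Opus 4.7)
The plan is to exploit the mod-$p$ reduction of character values combined with the hypothesis $p\nmid |U|$. By the spectral description in \cref{thm:main_spectra}, the eigenvalues of $\Gamma(R,U)$ are the sums
\[
\lambda_\chi = \sum_{u\in U}\chi(u),
\]
as $\chi$ ranges over $\widehat{R}$. Since $R$ is an $\mathbb{F}_p$-algebra, the additive group $(R,+)$ has exponent dividing $p$, so every $\chi(u)$ is a $p$-th root of unity, and $\lambda_\chi\in\mathbb{Z}[\zeta_p]$. The key algebraic input will be the prime $\mathfrak{p}=(1-\zeta_p)$ of $\mathbb{Z}[\zeta_p]$ lying above $p$, for which $\mathbb{Z}[\zeta_p]/\mathfrak{p}\cong\mathbb{F}_p$.

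For part (1), I would observe that every $p$-th root of unity satisfies $\zeta\equiv 1\pmod{\mathfrak{p}}$, so $\chi(u)\equiv 1\pmod{\mathfrak{p}}$ for every $u\in U$ and every character $\chi$. Summing over $u\in U$ gives
\[
\lambda_\chi\ \equiv\ |U|\pmod{\mathfrak{p}}.
\]
Because $\mathfrak{p}\cap\mathbb{Z}=p\mathbb{Z}$ and $p\nmid|U|$ by hypothesis, we have $|U|\not\equiv 0\pmod{\mathfrak{p}}$, and therefore $\lambda_\chi\neq 0$ for every $\chi\in\widehat{R}$.

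For part (2), I would argue by contradiction: suppose $I$ is a non-zero proper ideal of $R$ that is a homogeneous set in $\Gamma(R,U)$. By \cref{cor:homogeneous_unit} (or directly from \cref{prop:homogeneous}) we get $U+I=U$, hence $m+U=U$ for every $m\in I$ on cardinality grounds. Imitating the calculation in the proof of \cref{prop:0_multiplicity}, for any $\chi\in\widehat{R}$ and any $m\in I$,
\[
\lambda_\chi\ =\ \sum_{u\in U}\chi(m+u)\ =\ \chi(m)\lambda_\chi,
\]
so either $\lambda_\chi=0$ or $\chi(m)=1$. Since part (1) rules out the first possibility, we conclude $\chi(m)=1$ for every $\chi\in\widehat{R}$ and every $m\in I$. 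But the characters of the finite abelian group $(R,+)$ separate points, so this forces $I=0$, a contradiction. The "in particular" statement then follows from \cref{prop:homogeneous_ideal}. Finally, the field case is immediate: if $R$ is a (necessarily finite) field of characteristic $p$, then any $U\subset R^\times$ has order dividing $|R^\times|=|R|-1$, which is coprime to $p$.

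The arguments are essentially self-contained; the only subtle point is the modular reduction in (1), which replaces more elaborate Ramanujan-sum computations with a clean congruence. I do not anticipate a serious obstacle, but it is worth noting that the proof of (1) does not require $R$ to be Frobenius—only that the additive characters of $R$ separate points, which holds for any finite abelian group—so the result is genuinely ring-theoretically mild and only uses the $\mathbb{F}_p$-algebra structure through the coprimality $p\nmid|U|$.
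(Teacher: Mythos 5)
Your proof is correct. For part (1) your congruence $\lambda_\chi\equiv|U|\pmod{(1-\zeta_p)}$ carries the same content as the paper's argument, which writes $\lambda_r=\sum_{\ell=0}^{p-1}a_\ell\zeta_p^{\ell}$ with non-negative integer coefficients $a_\ell$ counting the fibers of $\psi_r$ on $U$, and uses the minimal polynomial of $\zeta_p$ to deduce that vanishing forces $a_0=\cdots=a_{p-1}$, hence $|U|=pa_0$, a contradiction; your reduction modulo the prime above $p$ is a slicker packaging of the same fact. Part (2) is where you genuinely diverge: the paper does not use part (1) there at all. It invokes \cref{cor:homogeneous_unit} to get $1+I\subset U$ with $I$ nilpotent, and then uses the Frobenius identity $(1+m)^p=1+m^p$ in characteristic $p$ to conclude that $1+I$ is a nontrivial $p$-subgroup of $U$, contradicting $p\nmid|U|$. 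You instead feed $m+U=U$ into the eigenvalue identity $\lambda_\chi=\chi(m)\lambda_\chi$, cancel $\lambda_\chi$ using part (1), and conclude $\chi(m)=1$ for every additive character, whence $I=0$ by separation of points. Both are valid; your route makes (2) a corollary of (1) and needs no nilpotency input, while the paper's is purely multiplicative and exhibits the obstruction as an explicit $p$-subgroup of $U$. One small point, which affects both write-ups equally: ``non-zero ideal'' in the statement must be read as ``non-zero proper ideal'' (as you do), since $I=R$ is trivially a homogeneous set; and the $S$ in the statement should be $U$, as both you and the paper's own proof implicitly assume.
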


\begin{proof}
  For the first statement, we need to show that if $r \in R$ then $\lambda_r = \sum_{u \in U} \chi_r(u) \neq 0.$ By definition 
  \[ \sum_{u \in U} \chi_r(u) = \sum_{u \in U} \zeta_p^{\psi_r(u)} = \sum_{\ell=0}^{p-1} \left[\sum_{\psi_r(u)=\ell} \zeta_p^{\ell} \right] = \sum_{\ell=0}^{p-1} a_\ell \zeta_p^{\ell}. \]
  Here $\psi \colon R \to \F_p$ is the non-degenerate $\F_p$-linear function on $R$ is described at the beginning of this section and $a_{\ell}$ is the number of $u \in U$ such that $\psi_r(u)=\ell$. Assuming that $\lambda_r=0.$ Then we must have $a_0 = a_1 = \cdots = a_{p-1}.$ This would imply that $|U| = \sum_{\ell=0}^{p-1} a_\ell = pa_0$ which is a contradiction.

  Let us prove the second statement. Suppose $I$ is a non-zero ideal which is a homogeneous set in $\Gamma(R,S)$. By \cref{prop:homogeneous}, we know that $I + U \subset U.$ In particular, we have $1+I \subset U.$ By \cref{cor:homogeneous_unit}, $I$ is a nilpotent ideal. Since $R$ has characteristics $p$, for each $m \in I $ we have $(1+m)^p = 1+m^p$. This shows that $1+I$ is a nontrivial $p$-group. Since $p \nmid |U|$, this is impossible. 
\end{proof}

Here is another example where can verify \cref{question:eigen_0}. This is an example of a generalized Paley graph which we will discuss in more details in \cref{subsec:paley}. Here, we focus on a particular case that is discussed in \cite{minac2023paley}.

\begin{prop}
    Let $m$ be a positive integer such that all prime divisors of $m$ are of the form $4k+1.$ Let $\rho\colon R:=(\Z/m) \to \C$ be the multiplicative character defined by Jacobi symbol $\rho(a) = \left(\frac{a}{m} \right)$ (see \cite{minac2023paley}). Let $U = \ker(\rho) = \{u \in (\Z/m)^{\times} \mid \rho(u)=1 \}$. 
    The following conditions are equivalent. 
    \begin{enumerate}
        \item $m=m_0$ where $m_0 = \rad(m)$ where $\rad(m)$ is the product of all distinct prime divisors of $m.$
        \item $0$ is not an eigenvalue of $\Gamma(\Z/m,U).$
        \item $\Gamma(\Z/m, U)$ is  prime. 

    \end{enumerate}
\end{prop}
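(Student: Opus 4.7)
The plan is to establish the cycle $(2)\Rightarrow(3)\Rightarrow(1)\Rightarrow(2)$. Before invoking the graph-theoretic machinery, I observe that $\Gamma(\Z/m,U)$ is both connected and anti-connected, so that \cref{prop:homogeneous_ideal} applies: connectedness is immediate from $1\in U$, and since $\rho$ is nontrivial, the complement generating set contains some unit $u$ with $\rho(u)=-1$, and a single unit already generates $\Z/m$.

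For $(2)\Rightarrow(3)$ I simply chain the contrapositive of \cref{prop:0_multiplicity} with \cref{prop:homogeneous_ideal}: if $0$ is not an eigenvalue, no non-zero ideal is a homogeneous set, so the graph is prime.

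For $(3)\Rightarrow(1)$ I argue the contrapositive. Assume $p^{e}\| m$ for some prime $p$ with $e\geq 2$. Consider the non-trivial ideal $I:=(m/p)\cdot\Z/m$ (of cardinality $p$), and verify both homogeneity conditions of \cref{prop:homogeneous}. Under CRT, every $a\in I$ vanishes at every prime-power factor other than $p^{e}$ and reduces to $p^{e-1}c$ at $p^{e}$; hence $1+a\equiv 1\pmod{p}$, so each local Jacobi factor in $\rho(1+a)=\prod_{i}\bigl(\tfrac{1+a}{p_{i}}\bigr)^{e_{i}}$ equals $1$. Therefore $1+I\subset U$, equivalently $U+I=U$, which handles the first condition (for the orbit $K=U$). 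For the second, if $k\notin U$ and $k+i\in U$ for some $i\in I$, setting $u=k+i\in U$ gives $u^{-1}k=1-u^{-1}i\in 1+I\subset U$, forcing $k\in U$, a contradiction. Thus $I$ is a non-trivial homogeneous set and $\Gamma(\Z/m,U)$ is not prime.

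For $(1)\Rightarrow(2)$ I compute the eigenvalues explicitly. Writing $\mathbf{1}_{U}(u)=\tfrac{1}{2}(1+\rho(u))$ on $(\Z/m)^{\times}$, I split each eigenvalue as
\[
\lambda_{r}=\tfrac{1}{2}\bigl(c_{m}(r)+\tau_{\rho}(r)\bigr),
\]
where $c_{m}$ is the classical Ramanujan sum and $\tau_{\rho}(r):=\sum_{u\in(\Z/m)^{\times}}\rho(u)\chi_{r}(u)$ is a twisted Gauss sum. Both factor via CRT into local contributions over the primes $p_{i}\mid m$; the local Ramanujan values are $p_{i}-1$ or $-1$ (never zero), and the local Gauss-sum factors are $0$ when the corresponding component of $r$ vanishes, and otherwise $\pm\sqrt{p_{i}}$ by Gauss's theorem (using $p_{i}\equiv 1\pmod 4$). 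Consequently, if $\gcd(r,m)=1$ then $2\lambda_{r}=(-1)^{k}\pm\sqrt{m}$, nonzero by irrationality of $\sqrt{m}$ for squarefree $m>1$; otherwise $\tau_{\rho}(r)=0$ and $2\lambda_{r}$ is a nonzero product of integers. The main obstacle is the bookkeeping of the CRT bijection on the character group so that the local Ramanujan and Gauss factors are correctly indexed, but once this is in place the non-vanishing conclusion is immediate.
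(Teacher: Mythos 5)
Your argument has one genuine gap, located in the preliminary claim that $\Gamma(\Z/m,U)$ is always anti-connected. You justify this by asserting that $\rho$ is nontrivial, but the Jacobi symbol $\left(\frac{\cdot}{m}\right)=\prod_i\left(\frac{\cdot}{p_i}\right)^{e_i}$ is the \emph{trivial} character on $(\Z/m)^{\times}$ whenever every $e_i$ is even, i.e.\ whenever $m$ is a perfect square. For $m=25$, for instance, $U=(\Z/25)^{\times}$, the generating set of the complement is $\{5,10,15,20\}\subset 5\Z/25$, and the complement is disconnected. Since your step $(2)\Rightarrow(3)$ invokes \cref{prop:homogeneous_ideal}, whose hypotheses include anti-connectedness, that step is not justified for such $m$. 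The implication is still (vacuously) true there, because $0$ is then an eigenvalue: the eigenvalues of $\Gamma(\Z/m,(\Z/m)^{\times})$ are the Ramanujan sums and $c_m(r)=\mu(m)=0$ for $r$ coprime to a non-squarefree $m$. But you must either say this explicitly, or restructure as the paper does: prove $(2)\Rightarrow(1)$ directly from \cref{prop:0_multiplicity} and prove $(1)\Rightarrow(3)$, establishing connectedness and anti-connectedness only under hypothesis $(1)$, where $m$ is squarefree (and $>1$) and $\rho$ genuinely is nontrivial.

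The remaining steps are correct and take mildly different routes from the paper's. For $(3)\Rightarrow(1)$ you exhibit the homogeneous ideal $(m/p)\cdot\Z/m$ via a direct CRT verification that $1+I\subset U$ and then check both conditions of \cref{prop:homogeneous}, whereas the paper uses the periodicity $\rho(a)=\rho(a+\rad(m))$ to see that $\rad(m)\cdot\Z/m$ is homogeneous; both work and are of comparable length. For $(1)\Rightarrow(2)$ you carry out the decomposition $\lambda_r=\tfrac12\left(c_m(r)+\tau_\rho(r)\right)$ and the local Ramanujan/Gauss-sum analysis in full (correctly using $p_i\equiv 1\pmod 4$ and the irrationality of $\sqrt m$ for squarefree $m>1$), where the paper simply cites the computation from \cite{minac2023paley}; your version is self-contained at the cost of the CRT bookkeeping you acknowledge.
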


\begin{proof}
    By the definition of $\rho$, $\rho$ is periodic modulo $m_0$; namely $\rho(a)= \rho(a+m_0).$ As a result, $m_0R$ is a homogeneous set in $\Gamma(R, U)$ by \cref{prop:homogeneous}. This shows that $(3) \implies (1).$ Furthermore, by \cref{prop:0_multiplicity}, we know that $(2) \implies (1).$ Let us show that $(1) \implies (2).$ In fact, if $m=m_0$, then the eigenvalues of $\Gamma(R, U)$ are calculated in \cite[Theorem 3.5]{minac2023paley}, which are all non-zero. 
    Let us now show that $(1) \implies (3).$ First, since $1 \in U$, we know that $\Gamma(R,U)$ is connected. Furthermore, by the Chinese remainder therem, there exists $a \in R^{\times}$ such that $\rho(a) = -1$ and hence $a \not \in U.$ This shows that the complement of $\Gamma(R,U)$ is also connected. Suppose to the contrary that  $\Gamma(R,U)$ is not prime. Then by \cref{prop:homogeneous_ideal} and \cref{cor:homogeneous_unit}, there exists a non-zero nilpotent ideal $I$ such that $1+I \subset U.$ However, since $m_0 = m$, there are no non-zero nilpotent ideals in $R$. This leads to a contradiction. We conclude that $\Gamma(R,U)$ must be prime. 
\end{proof}

Finally, we discuss the answer for \cref{question:eigen_0} for $p$-unitary Cayley graphs. 
\begin{prop}
    Let $p$ be a prime number and $R$ a finite Frobenius ring such that $p$ is invertible in $R.$ Let $U=(R^{\times})^p$.  Let $G_R(p)$ be the $p$-unitary Cayley graph $\Gamma(R,U)$. Suppose that $G_R(p)$ is connected and anticonnected. Then, the following statements are equivalent 
    \begin{enumerate}
        \item $\Gamma(R,U)$ is prime. 
        \item $R$ is a product of fields. 
        \item $0$ is not an eigenvalue of $\Gamma(R,U).$
    \end{enumerate}
\end{prop}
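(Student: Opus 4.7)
The strategy is to establish the cycle $(3) \Rightarrow (1) \Rightarrow (2) \Rightarrow (3)$. The first implication is essentially packaged in the tools already built up, so I will argue by contrapositive: if $\Gamma(R,U)$ is not prime, then since we have assumed it is connected and anti-connected, \cref{prop:homogeneous_ideal} produces a proper nonzero ideal $I \subset R$ which is a homogeneous set of $\Gamma(R,U)$, and then \cref{prop:0_multiplicity} forces $0$ to lie in the spectrum of $\Gamma(R,U)$. This gives $(3) \Rightarrow (1)$.

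For $(1) \Rightarrow (2)$ I will again argue by contrapositive: assuming $R$ is not a product of fields, I exhibit a proper nonzero ideal of $R$ that is a homogeneous set of $\Gamma(R,U)$. Decompose $R = \prod_i R_i$ into its local factors, with $\m_i$ and $\ell_i$ denoting the maximal ideal and residue characteristic of $R_i$; since $R$ is not a product of fields, some $\m_i$ is nonzero, so the Jacobson radical $J = \prod_i \m_i$ is a proper nonzero ideal of $R$. The crucial step is the inclusion $1 + J \subseteq U = (R^{\times})^p$. Because $p$ is invertible in $R$ it maps to a unit in each $R_i$, forcing $\ell_i \neq p$, and hence the finite abelian group $1 + \m_i$ has order $|\m_i|$, a power of $\ell_i$ coprime to $p$; the $p$-th power map is therefore bijective on $1 + \m_i$, yielding $1 + \m_i \subseteq (R_i^{\times})^p$, and the claim follows upon taking products. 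Given this inclusion, verifying the two conditions of \cref{prop:homogeneous} for $J$ is routine: the orbit $K = U$ satisfies $U + J = U(1+J) \subseteq U$, while for any other orbit $K = Ux$ with $x \notin U \cup J$, either $x$ is a unit and $x + J \subseteq x(1+J) \subseteq xU \subseteq R \setminus U$, or $x$ is a non-unit and $x + J$ lies inside a maximal ideal of $R$ and is hence disjoint from $U$.

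For $(2) \Rightarrow (3)$ I will exploit the product structure. Writing $R = \prod_{i=1}^k \F_{q_i}$, the subgroup $U = \prod_i (\F_{q_i}^{\times})^p$ splits along the factors, and $\Gamma(R,U)$ is the categorical tensor product of the graphs $\Gamma_i := \Gamma(\F_{q_i}, (\F_{q_i}^{\times})^p)$; since the spectrum of such a tensor product is the set of products of eigenvalues of the factors, it is enough to show that $0$ is not an eigenvalue of any $\Gamma_i$. When $p \nmid q_i - 1$, $\Gamma_i$ is a complete graph with eigenvalues $q_i - 1$ and $-1$, both nonzero. When $p \mid q_i - 1$, the nontrivial eigenvalues of $\Gamma_i$ can be written, via \cref{thm:main_spectra}, in the form $p\lambda = -1 + \sum_{j=1}^{p-1} \overline{\rho^j(r)}\, g(\rho^j)$, with $\rho$ a multiplicative character of $\F_{q_i}^{\times}$ of order $p$ and $g(\rho^j)$ the associated Gauss sum of modulus $\sqrt{q_i}$. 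Proving that these Gauss periods never vanish is the hardest step of the argument and the one I expect to form the main obstacle: it cannot be extracted from the general super-Cayley machinery of this paper but has to be deduced from classical Gauss-sum identities, and for a unified treatment in the context of $p$-unitary Cayley graphs I would appeal to \cite{podesta2021_finitefield,podesta2019spectral}.
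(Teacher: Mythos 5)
Your overall architecture $(3)\Rightarrow(1)\Rightarrow(2)\Rightarrow(3)$ matches the paper's (the paper runs $(1)\Rightarrow(2)\Rightarrow(3)\Rightarrow(1)$, which is the same cycle), and two of your three implications are sound. Your $(3)\Rightarrow(1)$ via \cref{prop:homogeneous_ideal} and \cref{prop:0_multiplicity} is exactly the paper's argument. Your $(1)\Rightarrow(2)$ is actually \emph{more} self-contained than the paper's: the paper simply cites an external result (\cite[Proposition 3.1]{nguyen2024certain}) for the fact that the Jacobson radical is a homogeneous set, whereas you prove $1+J\subseteq U=(R^\times)^p$ directly from the observation that $1+\m_i$ is an $\ell_i$-group with $\ell_i\neq p$, so the $p$-th power map is bijective on it, and then check the two conditions of \cref{prop:homogeneous}. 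That argument is correct and is a nice addition.

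The genuine gap is in $(2)\Rightarrow(3)$. After reducing to a single field $\F_{q_i}$ with $p\mid q_i-1$, you identify the nontrivial eigenvalues as Gaussian periods and then stop, asserting that their nonvanishing ``cannot be extracted from the general super-Cayley machinery of this paper'' and must be imported from classical Gauss-sum identities in \cite{podesta2019spectral,podesta2021_finitefield}. That is both an incomplete step and a mischaracterization: the paper's \cref{prop:prime_to_p}, proved two propositions earlier, handles it with a three-line elementary argument. Writing $\lambda_r=\sum_{u\in U_i}\zeta_{\ell_i}^{\psi_r(u)}=\sum_{j=0}^{\ell_i-1}a_j\zeta_{\ell_i}^{j}$ where $\ell_i=\mathrm{char}(\F_{q_i})$, a vanishing eigenvalue would force $a_0=\cdots=a_{\ell_i-1}$ (since the minimal polynomial of $\zeta_{\ell_i}$ is $1+x+\cdots+x^{\ell_i-1}$), hence $\ell_i\mid |U_i|$; but $|U_i|$ divides $q_i-1$, which is coprime to $\ell_i$. (Note the notational trap: in \cref{prop:prime_to_p} the prime plays the role of the residue characteristic $\ell_i$, not the exponent $p$ of your proposition; since $p$ is invertible in $R$ these are necessarily distinct, and the hypothesis $\ell_i\nmid|U_i|$ is automatic.) So your route would work if you supplied the Gauss-period nonvanishing, but as written that key step is missing, and the intended proof avoids Gauss sums entirely.
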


\begin{proof} We will show that $(1) \implies (2) \implies (3) \implies (1).$

Let us first show that $(1) \implies (2)$. Let $R=R_1 \times R_2 \times \ldots \times R_d$ be the factorization of $R$ into a product of local rings. By \cite[Proposition 3.1]{nguyen2024certain}, we know that the Jacobson radical of $R$ is a homogeneous set in $\Gamma(R,U).$ Consequently, if $\Gamma(R,U)$ is prime, then $\Rad(R)=0.$ In other words, $R$ must be a product of fields. 

Let us now show that $(2) \implies (3).$ In fact, in this case, $G_R(p)$ is isomorphic to the tensor products of $G_{R_i}(p).$ Since $R_i$ is a field, $U_i = (R_i^{\times})^p$ satisfies the conditions of \cref{prop:prime_to_p}. As a result, all eigenvalues of $G_{R_i}(p)$ are non-zero. This shows that all eigenvalues of $R_i$ are non-zero as well. 

Finally, \cref{prop:homogeneous_ideal} and \cref{prop:0_multiplicity} show that $(3) \implies (1).$ 
\end{proof}

\subsection{Some examples.}
We discuss various examples of well-known $U$-unitary Cayley graphs over a finite Frobenius ring with a fixed generating character $\chi$. In particular, we explain the relationship between their spectra and various arithmetical sums.  

\subsubsection{\textbf{Gcd-graphs}}
The case $U=R^{\times}$ is considered in various works such as \cite{unitary, klotz2007some, nguyen2024gcd}. In this case, the generalized Ramanujan sum defined in \cref{def:generalized_ramanujans} can be explicitly calculated. More precisely, in \cite[Theorem 4.14]{nguyen2024gcd}, we show that 
\[ c(R, R^{\times}, \chi) = c(g,R) = \frac{\varphi(R)}{\varphi(R/\Ann_{R}(g))} \mu(R/\Ann_R(g))\]
Here $\varphi$ is the generalized Euler function; $\varphi(R)=|R^{\times}|$ and $\mu$ is the generalized M\"obius function. We note that this sum is independent of the choice of the generating character $\chi$ (since two generating characters differ by a scaling by a unit in $R$.) 
\subsubsection{\textbf{$p$-unitary Cayley graphs}}
Let $U=(R^{\times})^p$ where $p$ is a positive integer. In this case, the graph $\Gamma(R, U)$ is called a $p$-unitary Cayley graph (see \cite{nguyen2024certain,podesta2019spectral, podesta2023waring}.) In this case, the generalized Ramanujan sum $c(R, U, a)$ is related to the Heilbronn sum, which we now recall. 

\begin{definition}
Let $p$ be a prime number. For each $a \in \Z/p^2$ The Heilbronn sum $H_a(p)$ is defined as follows 
\[ H_p(a) = \sum_{\ell =1}^{p-1} e^{\frac{2 \pi i a \ell^p}{p^2}}.\]
\end{definition}
By \cite[Lemma 3.1]{garcia2018supercharacter}, we know that for $R=\Z/p^2$, $\{\ell^p\}_{\ell =1}^{p-1}$ is a set of representative for $U = (R^{\times})^p$. Furthermore, if we let $\chi$ be the generating character of $\Z/p^2$ defined by $\chi(a) = e^{\frac{2 \pi i a}{p^2}}$, then we can rewrite
\[ H_p(a) = \sum_{u \in U} \chi_a(u) = c(\Z/p^2, U, \chi_a).\]
In summary, we have the following.
\begin{prop}
    Let $R=\Z/p^2$. Then the spectrum of the $U$-unitary Cayley graph $\Gamma(R, U)$ is precisely the multiset $\{H_a(p)\}_{a \in R}$ of Heilbronn sums. 
\end{prop}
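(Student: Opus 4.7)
The plan is to deduce the claim as a direct specialization of Theorem~\ref{thm:main_spectra} to the Frobenius ring $R = \Z/p^2$ with $U = (R^\times)^p$, combined with the explicit choice of generating character $\chi(x) = e^{2\pi \i x/p^2}$ recalled just before the statement. Recall that by the circulant diagonalization built into Theorem~\ref{thm:main_spectra} (which in turn rests on Proposition~\ref{prop:eigenvalues}), the spectrum of $\Gamma(R, U)$ with multiplicities is the multiset $\{\lambda_a\}_{a\in R}$, where
\[
\lambda_a \;=\; \sum_{u \in U} \chi_a(u) \;=\; \sum_{u \in U} \chi(au).
\]
So the task reduces to identifying each $\lambda_a$ with the Heilbronn sum $H_p(a)$.

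For that, I would invoke the fact, cited in the excerpt from \cite[Lemma~3.1]{garcia2018supercharacter}, that $\{\ell^p : 1 \le \ell \le p-1\}$ is a set of coset representatives for $U = (R^\times)^p$ inside $R = \Z/p^2$. This is precisely what one needs: since $|U| = p-1$ and the $p$-th powers $\ell^p$ for $\ell = 1, \dots, p-1$ are pairwise distinct modulo $p^2$ (they are pairwise distinct modulo $p$ by Fermat's little theorem, which lifts to distinctness modulo $p^2$), they exhaust $U$ exactly once. Substituting this parametrization into the formula for $\lambda_a$ yields
\[
\lambda_a \;=\; \sum_{\ell=1}^{p-1} \chi(a \ell^p) \;=\; \sum_{\ell=1}^{p-1} e^{2\pi \i a \ell^p/p^2} \;=\; H_p(a),
\]
which is the desired equality for each $a \in R$.

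To conclude with the multiset statement, I would note that the indexing by $a \in R$ already carries the correct multiplicities: two elements $a, a' \in R$ lying in the same $U$-orbit give $\lambda_a = \lambda_{a'}$ (because $a' = ua$ forces $\chi_{a'}(u') = \chi_a(uu')$ and a change of variables $u'' = uu'$ fixes $U$ setwise), so the orbit-indexed multiset $\{[\lambda_{r_i}]_{|K_i|}\}$ appearing in Theorem~\ref{thm:main_spectra} coincides with $\{\lambda_a\}_{a \in R} = \{H_p(a)\}_{a \in R}$. There is no genuine obstacle: the proof is a one-line application of Theorem~\ref{thm:main_spectra} once the explicit set of $U$-representatives and the explicit generating character are plugged in, and the only bookkeeping issue is ensuring that the two natural indexings of the spectrum (by $R$ and by $U$-orbits) agree as multisets, which is immediate from the $U$-invariance of $\lambda_{(\,\cdot\,)}$.
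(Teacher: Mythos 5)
Your proposal is correct and follows essentially the same route as the paper: the paper also obtains $\lambda_a = \sum_{u\in U}\chi_a(u)$ from the general spectral theorem and identifies this with $H_p(a)$ via \cite[Lemma 3.1]{garcia2018supercharacter}, which provides $\{\ell^p\}_{\ell=1}^{p-1}$ as a complete set of representatives for $U=(R^\times)^p$. Your extra remarks --- verifying the distinctness of the $\ell^p$ modulo $p^2$ and checking that the orbit-indexed multiset of Theorem~\ref{thm:main_spectra} agrees with the $R$-indexed one by $U$-invariance of $\lambda_{(\cdot)}$ --- are correct bookkeeping that the paper leaves implicit.
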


\subsubsection{\textbf{Paley graphs}} \label{subsec:paley}

We recall that the Paley graph $P_{\rho}$ is $U$-unitary Cayley graph $\Gamma(R,U)$ with 
\[ U = \ker(\rho) =\{u \in R^{\times} \mid \rho(u)=1\}. \]
Here $\rho\colon R^{\times} \to \C^{\times}$ is a multiplicative character.

\begin{definition}(See \cite[Definition 1]{lamprecht1953allgemeine})
    Let $\rho\colon R^{\times} \to \C^{\times}$ is a multiplicative character and $\varphi\colon R \to \C^{\times}$ an additive character of $R.$  Following a standard convention, we extend $\rho$ to be a function $\rho\colon  R \to \C$ by the rule that $\rho(a)=0$ if $a \not \in R^{\times}.$ The Gauss sum  $\tau(\rho, \varphi)$ is defined to be 
    \[ \tau(\rho, \varphi) = \sum_{r \in R^{\times}} \rho(r) \varphi(r) = \sum_{r \in R} \rho(r) \varphi(r).\]
\end{definition}

\begin{definition}
    We say that $\rho$ has order $d$ if $\rho^d(u)=1$ for all $u \in R^{\times}.$
\end{definition}

\begin{prop}
Suppose that $\rho$ has order $d.$ Then,    $P_{\rho}$ is $K$-rational where $K$ is a number field of degree at most $d.$
\end{prop}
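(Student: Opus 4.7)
The plan is to read off the bound on $[K:\Q]$ directly from the index $[R^\times:U]$ by combining the rationality statement in \cref{prop:rationality} with elementary Galois theory.

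First, I would fix a positive integer $n$ with $nR=0$, so that $\chi$ takes values in $\mu_n$ and the canonical map $\pi\colon (\Z/n)^\times\to R^\times$ makes sense. Since $\rho$ has order $d$, its image $\rho(R^\times)$ is a subgroup of the group $\mu_d$ of $d$-th roots of unity, so $[R^\times:U]=[R^\times:\ker\rho]=|\rho(R^\times)|$ divides $d$. Let $H=\pi^{-1}(U)\subseteq (\Z/n)^\times$. The composition
\[
(\Z/n)^\times\xrightarrow{\pi} R^\times \xrightarrow{\rho} \C^\times
\]
has kernel exactly $H$, so it induces an injective homomorphism $(\Z/n)^\times/H\hookrightarrow \mu_d$. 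In particular $[(\Z/n)^\times:H]$ divides $d$.

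Next, \cref{prop:rationality} applied to $\Gamma(R,U)$ says that every eigenvalue of $P_\rho=\Gamma(R,U)$ lies in the fixed field $K:=\Q(\zeta_n)^H$. Under the standard identification $\Gal(\Q(\zeta_n)/\Q)\cong (\Z/n)^\times$ and the Galois correspondence, one has
\[
[K:\Q]=[(\Z/n)^\times:H],
\]
which by the previous paragraph is at most $d$. Thus $P_\rho$ is $K$-rational with $[K:\Q]\le d$, as desired.

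There is essentially no serious obstacle here: the only thing to verify with care is that the two index bounds $[(\Z/n)^\times:H]\le [R^\times:U]\le d$ are multiplicative / divisibility statements rather than mere inequalities, which is automatic from the injection $(\Z/n)^\times/H\hookrightarrow R^\times/U\hookrightarrow \mu_d$. If one wanted a sharper statement, the same argument shows that $[K:\Q]$ equals the order of $\rho\circ\pi$ as a character of $(\Z/n)^\times$, which may be strictly smaller than $d$ when $\pi$ fails to be surjective.
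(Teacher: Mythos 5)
Your argument is correct and is essentially the paper's own proof: the paper likewise observes that $H$ is the kernel of the induced Dirichlet character $\rho\circ\pi\colon (\Z/n)^{\times}\to\mu_d$, so that $[K:\Q]=[(\Z/n)^{\times}:H]\le d$, and then invokes \cref{prop:rationality}. You merely spell out the divisibility bookkeeping in more detail.
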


\begin{proof}
    Let $H$ be as in \cref{prop:rationality}. Then $H$ is the kernel of the induced Dirichlet character $\rho\colon (\Z/n)^{\times} \to \mu_d \subset  \C^{\times}$. Consequently, the fixed field $K=\Q(\zeta_n)^H$ has degree at most $d.$ By \cref{prop:rationality}, we know that $P_{\rho}$ is $K$-rational. 
\end{proof}
\begin{prop} \label{prop:characteristic_function}
Suppose that $\rho\colon R^{\times} \to \C^{\times}$ is a multiplicative character of order $d.$ Let $U = \ker(\rho).$ Then, the characteristic function of $U$ is given by 
\[ 1_{U}(a) = \frac{1}{d} \cdot \frac{1-\rho(a)^d}{1-\rho(a)} \rho(a) = \frac{1}{d} \sum_{i=1}^d \rho^i(a).\]
\end{prop}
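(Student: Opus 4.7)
The plan is to verify the two displayed formulas separately and then prove the identity by splitting on where $a$ lies. First, for the equality between the two expressions, I would invoke the standard geometric series identity $\sum_{i=1}^{d} x^{i} = x \cdot \frac{1-x^{d}}{1-x}$ (for $x \neq 1$), applied with $x = \rho(a)$; at the single value $\rho(a)=1$ the left-hand expression is read as its removable-singularity value $d$, which matches the sum. So the burden of the proposition reduces to showing that $\frac{1}{d}\sum_{i=1}^{d}\rho^{i}(a) = 1_{U}(a)$ for every $a \in R$.

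Next, I would case-split on the position of $a$ relative to $R^{\times}$ and $U$. If $a \in U$, then $\rho(a)=1$, so $\rho^{i}(a)=1$ for all $i$ and the average equals $1 = 1_{U}(a)$. If $a \in R^{\times} \setminus U$, then $\rho(a)$ is a root of unity different from $1$; because $\rho$ has order $d$, we have $\rho(a)^{d} = \rho(a^{d}) = 1$, so
\[
\sum_{i=1}^{d} \rho^{i}(a) \;=\; \rho(a)\cdot\frac{1-\rho(a)^{d}}{1-\rho(a)} \;=\; 0 \;=\; 1_{U}(a).
\]
Finally, if $a \notin R^{\times}$, then under the paper's convention $\rho(a)=0$, hence $\rho^{i}(a)=0$ for every $i \geq 1$, and the sum vanishes; since $U \subset R^{\times}$ we also have $1_{U}(a)=0$.

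Since there is no real obstacle here—the argument is a direct orthogonality-style computation for the cyclic group generated by $\rho$—the only mild subtlety I would flag in writing it up is the removable singularity in the quotient $\tfrac{1-\rho(a)^{d}}{1-\rho(a)}$ at $\rho(a)=1$, which is best handled by systematically preferring the sum form $\tfrac{1}{d}\sum_{i=1}^{d}\rho^{i}(a)$ of the right-hand side and treating the quotient form as shorthand valid via the geometric series.
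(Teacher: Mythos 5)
Your proof is correct and follows essentially the same route as the paper: a direct case analysis using the convention $\rho(a)=0$ for $a\notin R^{\times}$ and the fact that $\rho(a)^{d}=1$ on units. Your explicit handling of the removable singularity at $\rho(a)=1$ (by preferring the sum form) is a minor point of extra care that the paper's terser write-up glosses over, but the substance is identical.
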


\begin{proof}
    By definition, $\rho(a)=0$ or $\rho(a)^d=1$. Consequently, we have 
\[
        \frac{1}{d}\frac{1-\rho(a)^d}{1-\rho(a)} \rho(a) = 
        \begin{cases}
        1 \text{ if } \rho(a)=1 \\ 
        0 \text{ else.}      
        \end{cases}
\] 
We conclude that $1_{U}(a) = \dfrac{1}{d}\dfrac{1-\rho(a)^d}{1-\rho(a)} \rho(a) $ for all $a \in R.$
\end{proof}
We can describe the Ramanujan sums in \cref{def:generalized_ramanujans} via Gauss sums (see \cite[Theorem 3.5]{minac2023paley} for an explicit calculation in the case $\rho$ is a quadratic character modulo $n$). 

\begin{prop}
Let $\varphi\colon R \to \C$ be an additive character of $R$ and $\rho\colon R^{\times} \to \C$ a multiplicative character of order $d.$ Let $c(R, \ker(\rho), \varphi)$ be the generalized Ramanujan sum described in \cref{def:generalized_ramanujans}. Then 
\[ c(R, \ker(\rho), \varphi) = \frac{1}{d} \sum_{i=1}^d \tau(\rho^i, \varphi).\]
\end{prop}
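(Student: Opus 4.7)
The plan is to reduce the Ramanujan sum to a sum over all of $R$ by inserting the characteristic function of $U=\ker(\rho)$, and then apply the explicit formula for $1_U$ given in \cref{prop:characteristic_function}. Concretely, I would begin by writing
\[
c(R, \ker(\rho), \varphi) = \sum_{u \in U} \varphi(u) = \sum_{a \in R} 1_U(a)\, \varphi(a),
\]
where the second equality uses the extension convention $\rho(a) = 0$ for $a \notin R^{\times}$, which forces $1_U(a) = 0$ on non-units and is consistent with the formula in \cref{prop:characteristic_function}.

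Next, I would substitute $1_U(a) = \tfrac{1}{d} \sum_{i=1}^{d} \rho^i(a)$ from \cref{prop:characteristic_function} and swap the order of summation:
\[
c(R, \ker(\rho), \varphi) = \frac{1}{d} \sum_{a \in R} \left( \sum_{i=1}^{d} \rho^i(a) \right) \varphi(a) = \frac{1}{d} \sum_{i=1}^{d} \sum_{a \in R} \rho^i(a)\, \varphi(a).
\]
Finally, by the definition of the Gauss sum (using the same extension convention, so that $\sum_{a \in R} \rho^i(a)\varphi(a) = \sum_{a \in R^{\times}} \rho^i(a)\varphi(a) = \tau(\rho^i, \varphi)$), the inner sum is exactly $\tau(\rho^i, \varphi)$, yielding the claimed identity.

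There is essentially no obstacle in this argument: the only thing to verify carefully is that the extension of $\rho^i$ by zero to non-units is compatible between the two sides of the equation, so that the identity $1_U = \tfrac{1}{d}\sum_{i=1}^{d} \rho^i$ can be used as a pointwise identity on all of $R$ (including on non-units, where both sides vanish). Once this bookkeeping is in place, the proof is a direct application of Fubini on a finite double sum.
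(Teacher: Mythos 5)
Your proposal is correct and matches the paper's own proof essentially line for line: both insert the characteristic function $1_U$, expand it via the identity $1_U = \tfrac{1}{d}\sum_{i=1}^d \rho^i$ from \cref{prop:characteristic_function}, swap the finite sums, and recognize each inner sum as $\tau(\rho^i,\varphi)$. Your extra remark about the zero-extension of $\rho$ to non-units being consistent on both sides is a careful touch that the paper leaves implicit.
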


\begin{proof}
    By definition, we have 
    \begin{align*}
    c(R, \ker(\rho), \varphi) &= \sum_{u \in \ker(\rho} \varphi(u)  = \sum_{u \in R} 1_U(a) \varphi(u)= \sum_{u \in R} \left[\sum_{i=1}^d \frac{1}{d} \rho^i(u) \right] \varphi(a) \\ 
    &= \frac{1}{d}\sum_{i=1}^d  \sum_{u \in R} \rho^i(u) \varphi(u) =  \frac{1}{d} \sum_{i=1}^d \tau(\rho^i, \varphi).
    \end{align*}
    \qedhere
\end{proof}

\section*{Acknowledgements}
The first-named author would like to thank Professor Stephan R. Garcia for his helpful correspondence. He is also grateful to Professor Torsten Sander for sharing his insights and providing constant encouragement, which has been instrumental in advancing this line of research to its current stage.
  The second-named author gratefully acknowledges the Vietnam Institute for Advanced Study in Mathematics (VIASM) for their hospitality and support during a visit in 2025. Additionally, the papers \cite{brumbaugh2014supercharacters,fowler2014ramanujan} have greatly influenced our work. We thank these authors for their invaluable ideas and contributions.


\end{document}